\documentclass[11pt]{article}

\usepackage[utf8]{inputenc}
\usepackage[T1]{fontenc}

\usepackage[margin=1in,footskip=0.25in]{geometry}

\usepackage{graphicx}
\graphicspath{{Figures/}}
\usepackage{subfig}
\usepackage{adjustbox}

\usepackage{amsmath,amssymb,amsthm}
\newtheorem{prop}{Proposition}

\usepackage{booktabs}
\usepackage{multirow}
\usepackage{multicol}
\usepackage{makecell}
\usepackage{comment}
\usepackage{diagbox}
\usepackage{siunitx}
\usepackage{colortbl}
\usepackage{xcolor}
\definecolor{UnimoRed}{RGB}{209,65,36}

\usepackage[ruled,vlined,linesnumbered]{algorithm2e}

\usepackage{tikz}
\usetikzlibrary{matrix,positioning,arrows.meta,arrows,fit,backgrounds,arrows.meta,shapes,calc}
\usepackage{tikz}
\usetikzlibrary{matrix,positioning}
\usepackage[customcolors]{hf-tikz}
\tikzset{
  mymat/.style={
    matrix of math nodes,
    text height=2.5ex,
    text depth=0.75ex,
    text width=3.25ex,
    align=center,
    column sep=-\pgflinewidth
  },
  mymats/.style={
    mymat,
    nodes={draw,fill=#1}
  }
}

\usepackage{enumitem}
\usepackage{varioref}
\usepackage[authoryear]{natbib}
\usepackage[nottoc]{tocbibind}

\usepackage{hyperref}
\usepackage{xspace}
\usepackage{cancel}
\usepackage{url}
\usepackage{todonotes}

\usepackage{authblk}

\title{\Large\bf An Exact Combinatorial Branch-and-Bound Algorithm for the Job Sequencing and Tool Switching Problem}

\author[1]{Alberto Locatelli\thanks{Corresponding author: \texttt{alberto.locatelli@unimore.it}}}
\author[2]{Jean-François Côté}
\author[2]{Leandro C. Coelho}

\affil[1]{DISMI, University of Modena and Reggio Emilia, Reggio Emilia, Italy}
\affil[2]{CIRRELT, Université Laval, Québec, Canada}
\date{}

\begin{document}

\maketitle 

\begin{abstract}
The Job Sequencing and Tool Switching Problem (SSP) is a well-known combinatorial optimization problem arising in the context of flexible manufacturing. Since the seminal work of Tang and Denardo (1988), the SSP has received significant attention in the literature, leading to the development of numerous exact and heuristic approaches. Despite these efforts, several benchmark instances proposed decades ago and containing only 20 jobs have remained unsolved to proven optimality.
In this work, we propose an exact algorithm for the SSP, namely the Combinatorial Branch-and-Bound (C-B\&B) algorithm, which combines two distinct branch-and-bound algorithms, each introducing novel features compared with the existing literature. The former relies on a new branching scheme designed to reduce the size of the implicit enumeration tree, together with a collection of new bounding functions. The latter builds on the branching scheme introduced by Laporte et al. (2004) and strengthens it with a new bounding function and two dominance rules. Within C-B\&B, these exact algorithms are complemented by a preprocessing phase that incorporates a new branch-and-bound-based heuristic capable of rapidly generating a high-quality initial incumbent solution.
Extensive computational experiments show that C-B\&B represents a strong breakthrough over previously published approaches, proving optimality for more instances with significantly less computational effort and closing several benchmark instances that have remained open for decades.

\medskip
\noindent\textbf{Keywords:} job sequencing and tool switching problem; branch-and-bound; exact algorithms; combinatorial optimization; flexible manufacturing systems
\end{abstract}

\section{Introduction}\label{Sec_Into}
In the Job Sequencing and Tool Switching Problem (SSP), we are given a set $T=\{1,\dots,m\}$ of tools and a set $J=\{1,\dots,n\}$ of jobs that must be processed sequentially on a single flexible machine equipped with a tool magazine capable of holding at most $c$ tools at a time. Each job $j\in J$ requires a specific toolset $T_j\subseteq T$, all of which must be loaded into the magazine before the job can be processed. Since the total number of tools required across all jobs exceeds the magazine capacity (i.e., $c<m$), it may be necessary to perform some tool switches between consecutive jobs, each consisting of removing one tool from the magazine and inserting another one in its place. The SSP consists of determining both the job processing sequence and, for each job, the set of tools to be loaded into the magazine during its processing, with the objective of minimizing the total number of tool switches. Accordingly, the SSP can be decomposed into two interconnected sub-problems: the Job Sequencing Problem (JSP), which aims to find an optimal job sequence, and the Tool Replacement Problem (TRP), which consists of determining, for each sequenced job, an optimal subset of tools to be loaded into the magazine during its processing. While the JSP is NP-hard (see, e.g., \citealt{CKOS1994}), the TRP can be solved in polynomial time using, for instance, the well-known Keep Tool Needed Soonest (KTNS) policy introduced by \citet{TD1988} or the more recent and even faster IGAFull algorithm proposed by \citet{Qiu2026}.

Although, to the best of our knowledge, the earliest application related to the SSP dates back to an article by \citet{Belady1966} on computer memory management, the problem was formally stated only by \citet{TD1988} in the context of efficient tool management in Flexible Manufacturing Systems (FMS). In this setting, a computer numerical control machine (hereafter referred to simply as a machine) is used to manufacture a set of products (hereafter referred to as jobs), each requiring a specific set of tools (e.g., cutting blades, milling cutters, drill bits, etc.). The machine is equipped with a capacitated tool magazine capable of holding only a limited number of tools simultaneously. If, before processing a job, some of its required tools are not loaded in the magazine, a tool-changing device performs a sequence of tool switches to load them. Each tool switch removes a tool that the job does not require and inserts in its place a required one, retrieved from the storage area, where all tools not currently loaded in the magazine are stored. This device enables the machine to remain flexible by dynamically adjusting the configuration of its tool magazine in order to process jobs with different tool requirements. However, this operational flexibility often comes at a cost: in many practical settings, tool switches are time-consuming and can significantly affect total production time (see, e.g., \citealt{L2023}). In this context, the SSP naturally arises, since minimizing the total number of tool switches is crucial for improving FMS efficiency and productivity.

Over the last three decades, the SSP has attracted considerable attention in the literature. As evidence of this, the comprehensive survey by \citet{C2019} includes more than 60 bibliographic references, while several additional relevant contributions have appeared since its publication (see Section~\ref{sec:LiteratureReview}). Beyond its theoretical interest, this attention is largely motivated by the wide range of real-world applications the SSP has found over the years across several industrial sectors, including printed circuit board manufacturing (see, e.g., \citealt{Privault1995}), computer memory management (see, e.g., \citealt{Ghiani2010}), pharmaceutical packaging (see, e.g., \citealt{mutze2014}), and the printing industry (see, e.g., \citealt{ILLS2022,ILL2022}), among others. This interest has led to the development of a rich variety of exact and heuristic approaches. Despite these efforts, several benchmark instances with only 20 jobs, introduced decades ago, have yet to be solved to proven optimality.

In this work, we fill this gap by proposing a new exact algorithmic framework, called the Combinatorial Branch-and-Bound (C-B\&B) algorithm, which combines two distinct and complementary Branch-and-Bound (B\&B) algorithms. Both are purely combinatorial: branching is performed directly on partial job sequences, while bounding relies on ad hoc lower-bounding functions, without resorting to a Mixed-Integer Linear Programming (MILP) formulation. Although key to both algorithms is their combination of strong pruning capabilities and the low computational effort required to evaluate each node of their search tree, they differ in how they construct the partial job sequences. The first, the Positional-Insertion B\&B (PI-B\&B) algorithm, explores the solution space by inserting an unsequenced job into each possible position of the current partial sequence, whereas the second, the Positional-Extension B\&B (PE-B\&B) algorithm, constructs complete sequences by progressively extending a partial sequence with one unsequenced job. This structural difference gives the two algorithms complementary strengths: PI-B\&B is particularly effective when optimal solutions require many tool switches, whereas PE-B\&B is better suited to instances with relatively few tool switches and long subsequences of jobs that can be processed without tool switches. C-B\&B exploits this complementarity through a solver-selection strategy that chooses which B\&B algorithm to use according to the characteristics of the instance. These characteristics are assessed based on the incumbent solution computed during preprocessing: if its value is relatively large, PI-B\&B is selected; otherwise, PE-B\&B is used. The main contributions of this paper can be summarized as follows:
\begin{itemize}
    \item We introduce PI-B\&B, a new exact B\&B algorithm for the SSP that combines an insertion-based branching scheme designed to reduce the size of the implicit enumeration tree with a collection of new bounding functions that substantially limit the number of explored nodes.
    \item We present PE-B\&B, an exact B\&B algorithm for the SSP that builds on the branching scheme introduced by \citet{LSS2004} and strengthens it with a new bounding function and two dominance rules.
    \item We design PI-B\&B Heuristic (PI-B\&B-H), a heuristic variant of PI-B\&B that rapidly generates high-quality SSP solutions through a new completion pruning rule, a search policy prioritizing promising solutions, and an aggressive fathoming rule that may discard optimal solutions but substantially reduces the search space.
    \item We introduce a preprocessing and solver-selection strategy that identifies easy instances, generates strong incumbent solutions, and selects the final exact enumeration according to the characteristics of the instance.
    \item We propose C-B\&B, an exact algorithm for the SSP that integrates the components described above, and evaluate it on the benchmark instances from the literature. The computational results show that C-B\&B substantially improves upon the existing exact methods, being the first algorithm to solve all the benchmark instances with up to 25 jobs to proven optimality while requiring significantly less computational effort. In particular, it solves all the instances with 15 jobs in a few hundredths of a second and all the instances with 20 or 25 jobs in a few seconds, including those that had previously remained open after 1 hour of computational time.
\end{itemize}

The remainder of the paper is organized as follows. Section~\ref{sec:LiteratureReview} reviews developments in the field of SSP since 2019. Section~\ref{sec:Notation} formally defines the SSP and introduces the notation used throughout the paper. Sections~\ref{sec:B&B} and~\ref{sec:BBL} present PI-B\&B and PE-B\&B, respectively, whereas Section~\ref{sec:Heuristic} describes PI-B\&B-H. Section~\ref{sec:ExactAlgorithm} presents the overall C-B\&B algorithm. Extensive computational results on benchmark instances from the literature and comparisons with state-of-the-art algorithms are provided in Section~\ref{sec:ComputationalResults}. Finally, conclusions are drawn in Section~\ref{sec:Conclusions}.

\section{Literature Review}\label{sec:LiteratureReview}
After almost four decades of intensive research, the SSP literature has gradually expanded, with a variety of exact and heuristic solution approaches proposed over the years. For the models and solution techniques developed up to 2019, we refer the reader to the survey by \citet{C2019}.

With regard to exact algorithms proposed after 2019, \citet{daSilva2021} introduced a new integer linear programming formulation based on multicommodity flow, including new symmetry-breaking constraints. Computational results on standard benchmark instances showed that the proposed model outperforms previous formulations in terms of optimal solutions found, lower bound quality, and computing time. Subsequently, \citet{Akhundov2024} reformulated the SSP as a job grouping and sequencing problem and introduced a new multicommodity flow model enriched with symmetry-breaking cuts. To solve the model, a three-phase solution procedure was proposed. Initially, all parameters of the model are fixed except for the number of required job groups, which strongly influences the size of the model. Then, the procedure iteratively solves a restricted model, progressively refining a tight upper bound on the number of required job groups until optimality is proven. Computational experiments on classical benchmarks demonstrated that this method outperforms the approach proposed by \citet{daSilva2021}, solving up to 33\% more instances within shorter computing times. More recently, \citet{legrand2025} proposed a dynamic-programming-based algorithm that explores the solution space using the $A^*$ search algorithm, together with tighter lower bounds than those available in the existing literature. Preliminary computational experiments on standard benchmark instances showed that the proposed algorithm is competitive with the exact approaches available in the literature.

With regard to heuristic algorithms proposed after 2019, \citet{Mecler2021} introduced an efficient Hybrid Genetic Search (HGS) for the SSP. This approach uses a simple permutation-based solution representation and evaluates solution quality by jointly considering cost and contribution to population diversity during both parent and survivor selection. Moreover, a secondary objective is introduced to break ties among solutions and to promote those characterized by short intervals between two consecutive uses of the same tool, thereby favoring solutions that are more likely to yield improvements, as short intervals can be more easily filled by keeping the tool in the magazine. Computational experiments on benchmark instances show that the proposed approach significantly outperforms previous heuristics in terms of both solution quality and computational time.

Although many exact and metaheuristic methods proposed in the SSP literature rely on repeatedly solving the TRP to evaluate candidate job sequences, relatively little attention has historically been paid to reducing the computational time required by this task. For more than three decades the standard approach has been the KTNS algorithm, which solves the TRP in $O(mn)$ time. Only recently, \citet{Qiu2026} proposed a new algorithm for the TRP, namely IGAFull, an exact $O(cn)$ time algorithm that sequentially invokes two procedures, both requiring $O(cn)$ time. The former, namely IGA, computes the minimum number of tool switches required to process a given job sequence, while the latter, called ToFullMag, constructs an optimal solution by determining, for each job in the sequence, an optimal set of tools to be loaded into the magazine during processing. 
Finally, \citet{Almeida2025} reviewed the existing literature on TRP algorithms and analyzed both the classical KTNS algorithm and the IGAFull approach. The authors also proposed parallel implementations of both algorithms based on a divide-and-conquer strategy exploiting symmetry properties of TRP solutions. Computational experiments showed that parallel implementations can significantly reduce evaluation times for large instances, while for instances with fewer than 30 jobs the additional parallelization overhead makes classical serial implementations faster.

A number of SSP variants have been investigated since 2019.
\citet{Calmels2022} introduced the SSP with non-identical parallel machines (SSP-NPM), in which jobs must be assigned to and sequenced on machines characterized by different processing times, magazine capacities, and tool-switching times. To address the problem, the author proposed a MILP formulation, several construction heuristics, and an iterated local search approach. Subsequently, \citet{Cura2023} developed a hybrid algorithm combining a genetic algorithm with local search. Two variants were proposed, differing in the adopted tool-loading policy: the KTNS policy and a randomly remove tool policy. The following year, \citet{Soares2024} introduced two parallel biased random-key genetic algorithms, hybridized with tailored local search procedures organized within a variable neighborhood descent framework, to separately minimize the makespan and the total flow time. More recently, \citet{Hadj2026} focused on the exact solution of the makespan-minimization variant by improving the position-based MILP formulation of \citet{Calmels2022} and introducing new position-based and job-group-based arc-flow formulations. These models were further strengthened through lower and upper bounds, valid inequalities, and symmetry-breaking constraints. In a different direction, \citet{RIFAI2022} studied the sequence-dependent SSP, in which tool-switching times are non-uniform and depend on the specific pair of tools removed from and inserted into the same magazine slot. To address this variant, the authors proposed a two-stage heuristic that combines an adaptive large neighborhood search for the JSP with a procedure integrating the KTNS policy and simulated annealing for the corresponding TRP. More recently, \citet{Iori2024} addressed four variants of the SSP that differ in terms of tool-switching times, which may be constant or non-uniform; job sequences, which may be fixed or variable; and tool requirements, which may be ordered or unordered. In the ordered variants, the tools required by each job must be arranged in a prescribed order along the magazine slots. The authors analyzed the computational complexity of these variants and proposed dedicated arc-flow formulations for their solution.

\section{Notation and Problem Definition}\label{sec:Notation}
In this section, we introduce the notation used throughout the article. Given a sequence of jobs $\sigma = (\sigma_1, \dots, \sigma_l)$, let $J_{\sigma} = \{\sigma_1, \dots, \sigma_l\}$ be the set of jobs in $\sigma$, and let $\overline{J}_{\sigma} = J \setminus J_{\sigma}$ be the set of jobs not included in $\sigma$. The reverse sequence of $\sigma$ is denoted by $\sigma^R = (\sigma_l, \dots, \sigma_1)$. Moreover, for each $p \in \{1,\dots,l\}$, let $\sigma_{\overleftarrow{p}} = (\sigma_1, \dots, \sigma_p)$ denote the subsequence of $\sigma$ up to position $p$, while $\sigma_{\overrightarrow{p}} = (\sigma_p, \dots, \sigma_l)$ denotes the subsequence of $\sigma$ from position $p$ to the end.
Let $T_\sigma = \bigcup_{j \in J_{\sigma}} T_j$ be the set of all tools required by the jobs in $J_{\sigma}$.
Conversely, let $\overline{T}_\sigma = T \setminus T_\sigma$ denote the set of tools that are not required by any job in $J_{\sigma}$.
A set $M \subseteq T$ such that $\vert M \vert = c$ is called a magazine configuration.
A sequence of magazine configurations $\mathcal{M} = (M_1, \dots, M_l)$ is said to cover $\sigma$ if $T_{\sigma_i} \subseteq M_i$ holds ($i = 1, \dots, l$). We denote by $z(\mathcal{M}) = \sum_{i=2}^{l} \left| M_i \setminus M_{i-1} \right|$ the number of tool switches required for processing the sequence of magazine configurations $\mathcal{M}$.

Given a sequence of jobs $\sigma$, the corresponding TRP consists of determining the minimum number $z(\sigma)$ of tool switches required to process $\sigma$, together with a sequence of magazine configurations $\mathcal{M}_{\sigma}$ covering $\sigma$ such that
$z(\mathcal{M}_{\sigma}) = z(\sigma)$.
In the SSP, instead, we are given a set of jobs $J$, and the goal is to determine both a permutation $\sigma^*$ of $J$ and a sequence of magazine configurations $\mathcal{M}_{\sigma^*}$ covering $\sigma^*$ such that
$ z(\sigma^*) = z(\mathcal{M}_{\sigma^*}) =\min \{ z(\sigma) : \sigma \text{ is a permutation of } J \}$.

\section{The PI-B\&B Algorithm}\label{sec:B&B}
A B\&B algorithm is based on two main operations: branching, which involves partitioning the set of feasible solutions into subsets, and bounding, which aims to prune the enumeration by evaluating the objective value of the subproblems generated by this partition. In each of the B\&B algorithms we introduce in this paper, the nodes of the corresponding search tree are characterized by a partial sequence $\sigma$ of scheduled jobs and the set $\overline{J}_{\sigma}$ of remaining jobs that have yet to be inserted into $\sigma$. The algorithms differ in the branching scheme adopted to extend the partial sequences, and hence in the structure of the resulting search tree. For the sake of simplicity, in the remainder of the paper, we denote each node of a B\&B search tree by the corresponding sequence $\sigma$, and we call a completion of a node $\sigma$ any leaf of the subtree rooted at $\sigma$, i.e., any complete sequence of $n$ jobs that can still be generated from the partial sequence $\sigma$ in accordance with the corresponding branching scheme.

In this section, we present the bounding procedure and branching rule embedded in the proposed PI-B\&B algorithm.
Before executing the PI-B\&B algorithm, a preprocessing phase, detailed in Section~\ref{sec:ExactAlgorithm}, computes a heuristic solution $\sigma^H$ as well as the initial upper bound $ub = z(\sigma^H)$.

\subsection{Lower Bounding Functions}\label{sec:LB}
The PI-B\&B algorithm we propose relies on three new bounding techniques introduced to reduce the size of the branching tree by pruning its nodes. In this section, we present these techniques and provide a formal proof of their correctness. Before applying the bounding functions at the current node $\sigma$ of the B\&B search tree, the value $z(\sigma)$ is computed using IGA.

\subsubsection*{Bound $L_1(\sigma)$}
The first lower bound accounts for the tools that are not required by the jobs currently included in $\sigma$ but must be loaded when completing the sequence.
\begin{prop}A valid lower bound on the optimal solution of the subproblem at node $\sigma$ of the PI-B\&B tree is $
     L_1(\sigma) = z(\sigma) + \min\left\{\left| \overline{T}_\sigma \right|, m - c\right\}$.
\end{prop}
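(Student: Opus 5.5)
The plan is to compare an arbitrary completion $\pi$ of $\sigma$ with $\sigma$ itself. In PI-B\&B every completion $\pi$ is obtained by inserting the jobs of $\overline{J}_\sigma$ into positions of $\sigma$, so $\sigma$ is a subsequence of $\pi$. It therefore suffices to show that, for every permutation $\pi$ containing $\sigma$ as a subsequence,
\[
z(\pi) \ge z(\sigma) + \min\left\{\left|\overline{T}_\sigma\right|, m-c\right\}.
\]
I also assume, as is implicit in the SSP, that every tool is required by some job, so $\left|T_\sigma\right| = m - \left|\overline{T}_\sigma\right|$.

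Fix an optimal covering $\mathcal{M}=(M_1,\dots,M_n)$ of $\pi$, so that $z(\pi)=z(\mathcal{M})$. Write $z(\mathcal{M})$ as the total number of tool insertions after the first configuration, and split it as $I_{\mathrm{in}} + I_{\mathrm{out}}$. Here $I_{\mathrm{in}}$ counts insertions of tools of $T_\sigma$, and $I_{\mathrm{out}}$ counts insertions of tools of $\overline{T}_\sigma$. Let $a = \left|\overline{T}_\sigma \cap M_1\right|$.

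\textbf{Step 1.} Every tool of $\overline{T}_\sigma$ is required by some job of $\pi$. So every such tool not in $M_1$ must be inserted at least once, giving $I_{\mathrm{out}} \ge \left|\overline{T}_\sigma\right| - a$.

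\textbf{Step 2 (the main step).} I will build a covering $\mathcal{N}$ of $\sigma$ with $z(\mathcal{N}) \le I_{\mathrm{in}} - s$, where
\[
s = \min\left\{a,\ \left|T_\sigma\right| - (c-a)\right\}.
\]
The construction is a ``lazy'' projection of $\mathcal{M}$ onto the positions of $\sigma$:
\begin{itemize}
\item The $i$-th configuration of $\mathcal{N}$ is derived from $M_{p_i}$, where $p_i$ is the position of $\sigma_i$ in $\pi$.
\item Whenever a slot holds a tool of $\overline{T}_\sigma$, it instead keeps a tool of $T_\sigma$, so that insertions of $\overline{T}_\sigma$ tools are never performed.
\item The initial configuration is $N_1=(M_1\setminus\overline{T}_\sigma)\cup S$. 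Here $S$ consists of $\min\{a,\cdot\}$ tools of $T_\sigma \setminus M_1$, chosen as those that $\mathcal{M}$ inserts earliest, padded with arbitrary tools if needed.
\end{itemize}
There are at least $\left|T_\sigma\right|-(c-a)$ tools of $T_\sigma\setminus M_1$, and each of them must be inserted at some point in $\mathcal{M}$. Preloading them into the $a$ freed slots therefore saves $s$ insertions. Every remaining insertion in $\mathcal{N}$ can be charged injectively to an insertion of a $T_\sigma$ tool in $\mathcal{M}$ at or before the corresponding $\sigma$-position. The charging works because tools are kept as long as in $\mathcal{M}$ whenever possible, and any slot $\mathcal{M}$ uses for a $\overline{T}_\sigma$ tool can only leave $\mathcal{N}$ with more useful tools. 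Hence $z(\sigma)\le z(\mathcal{N})\le I_{\mathrm{in}}-s$.

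\textbf{Step 3.} Combining the two steps gives
\[
z(\pi)\ \ge\ z(\sigma)+s+\left|\overline{T}_\sigma\right|-a\ =\ z(\sigma)+\min\left\{\left|\overline{T}_\sigma\right|,\ m-\left|\overline{T}_\sigma\right|-c+a+\left|\overline{T}_\sigma\right|-a\right\}.
\]
The right-hand side simplifies to $z(\sigma)+\min\left\{\left|\overline{T}_\sigma\right|, m-c\right\}$, as required.

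The main obstacle is Step 2: making the charging argument rigorous. The key claim is that the projected, lazified configuration sequence never needs more insertions than $\mathcal{M}$ spends on $T_\sigma$ tools. I would prove it by induction over the positions of $\pi$, maintaining the invariant that the current $\mathcal{N}$-configuration contains $M_p\cap T_\sigma$ plus the preloaded tools not yet ``paid for''. Each new $T_\sigma$ insertion in $\mathcal{M}$ then costs at most one insertion in $\mathcal{N}$, and the first insertion of each preloaded tool costs none.
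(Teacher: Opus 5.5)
Your proof is correct and takes a different, more careful route than the paper's. The paper splits into two cases. If $|T_\sigma|\le c$, then $z(\sigma)=0$ and $L_1(\sigma)$ reduces to the trivial bound $m-c$. If $|T_\sigma|>c$, the paper simply asserts that each tool of $\overline{T}_\sigma$ costs one switch beyond $z(\sigma)$. That sentence does not address the free initial magazine: a tool of $\overline{T}_\sigma$ placed in $M_1$ costs nothing itself, and the bound holds only because it displaces a tool of $T_\sigma$ that must later be loaded. Your parameter $a$ and the preloading exchange make this trade-off explicit, and the minimum in $s$ absorbs the paper's case split. You get a single uniform argument, at the price of a charging lemma the paper never states.

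When you write Step 2 out, tighten four points.
\begin{enumerate}
\item Build $\mathcal{N}$ over all positions of $\pi$ rather than starting from $M_1$: taken literally, your $N_1$ need not cover $\sigma_1$ when $p_1>1$. Use core $R_p=(M_p\cap T_\sigma)\cup U_p$, where $U_p$ is the set of tools of $S$ not yet inserted by $\mathcal{M}$ up to $p$. Pad each configuration with tools of the previous one, so padding is free. Restrict to the positions of $\sigma$ only at the end; this cannot increase cost because $|A\setminus C|\le|A\setminus B|+|B\setminus C|$.
\item Capacity is exactly where the earliest-insertion choice of $S$ matters. If $U_p\ne\emptyset$, then $M_p\cap T_\sigma\subseteq(M_1\cap T_\sigma)\cup(S\setminus U_p)$ and $U_p\cap M_p=\emptyset$, so $|R_p|\le c-a+|S|\le c$.
\item Since $U_p$ only shrinks, every tool of $R_p\setminus R_{p-1}$ lies in $M_p\setminus M_{p-1}$ and is not a first insertion of a tool of $S$. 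This gives $z(\mathcal{N})\le I_{\mathrm{in}}-|S|$ directly.
\item $|T_\sigma|=m-|\overline{T}_\sigma|$ holds by definition. The assumption that every tool is required by some job is what Step 1 actually uses.
\end{enumerate}
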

\begin{proof}
Suppose that $\vert T_\sigma\vert \leq c$, thus $z(\sigma) = 0$ and $\vert \overline{T}_\sigma \vert \geq m - c$. In this case, $L_1(\sigma)$ provides the trivial lower bound $ m - c$. On the other hand, if $\vert T_\sigma \vert > c$, then $\vert \overline{T}_\sigma \vert < m - c$ holds, and we obtain $L_1(\sigma) = z(\sigma) + \vert \overline{T}_\sigma \vert$. In this case, as $\vert T_\sigma \vert > c$, each tool in $\overline{T}_\sigma$ will require at least one additional tool switch when all jobs in $\overline{J}_{\sigma}$ are inserted into $\sigma$ and thus $z(\sigma) + \vert \overline{T}_\sigma \vert$ provides a valid lower bound on the optimal value at the node.
\end{proof}

\subsubsection*{Bound $L_2(\sigma)$}
The second lower bound is based on the following observation. In any completion of a node $\sigma$, each job in $\overline{J}_{\sigma}$ is inserted either before or after a given job $\sigma_p$ of $\sigma$. A tool required by a job inserted on one side of $\sigma_p$ may also be required by a job on the opposite side. If such a tool does not remain loaded while $\sigma_p$ is processed, it must be removed and subsequently reloaded, thereby generating an additional tool switch. Consequently, when the magazine capacity is not sufficient to keep all these tools loaded simultaneously, additional tool switches must be incurred. This yields the following result.
\begin{prop}
Let $\sigma=(\sigma_1,\dots,\sigma_l)$ be a node of the PI-B\&B tree. For each $p\in\{1,\dots,l\}$ and each subset $\Omega\subseteq\overline{J}_{\sigma}$, a valid lower bound on the optimal value of the subproblem associated with node $\sigma$ is given by
\begin{equation}\label{LB_2}
L_2(\sigma,p,\Omega)
=
z(\sigma)
+
\max\left\{
0,\,
|T_{\sigma_p}|
+\min_{S\subseteq\Omega}
\left\{
|\overrightarrow{\Gamma}_{\sigma,p,S}|
+
|\overleftarrow{\Gamma}_{\sigma,p,\Omega\setminus S}|
+
|\Gamma_{\sigma,\Omega,S}|
\right\}
+\gamma_{\sigma,p}-c
\right\},
\end{equation}
where
\begin{align}
\overrightarrow{\Gamma}_{\sigma, p, S}
&=
\left[
\left(\bigcup_{j\in S}T_j\right)
\cap T_{\sigma_{\overrightarrow{p}}}
\right]
\setminus T_{\sigma_{\overleftarrow{p}}},
\label{LB2_eq_1}\\
\overleftarrow{\Gamma}_{\sigma, p, \Omega\setminus S}
&=
\left[
\left(\bigcup_{j\in\Omega\setminus S}T_j\right)
\cap T_{\sigma_{\overleftarrow{p}}}
\right]
\setminus T_{\sigma_{\overrightarrow{p}}},
\label{LB2_eq_2}\\
\Gamma_{\sigma, \Omega,S}
&=
\left[
\left(\bigcup_{j\in S}T_j\right)
\cap
\left(\bigcup_{j\in\Omega\setminus S}T_j\right)
\right]
\setminus T_\sigma,
\label{LB2_eq_3}\\
\gamma_{\sigma, p}
&=
\max\left\{
\left|
\left(
T_{\sigma_{\overleftarrow{p}}}
\cap T_{\sigma_{\overrightarrow{p}}}
\right)
\setminus T_{\sigma_p}
\right|
-\left(z(\sigma)-|T_\sigma|+c\right),
0
\right\}.
\label{LB2_eq_4}
\end{align}
\end{prop}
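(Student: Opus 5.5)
We fix an arbitrary completion $\pi$ of node $\sigma$ and show that $z(\pi)$ is at least the right-hand side of \eqref{LB_2}. In the PI-B\&B tree, jobs are inserted into $\sigma$ without reordering it, so $\pi$ contains $\sigma_1,\dots,\sigma_l$ in this relative order. We define $S\subseteq\Omega$ as the set of jobs of $\Omega$ placed before $\sigma_p$ in $\pi$; then $\Omega\setminus S$ is the set placed after. The minimum over $S$ in \eqref{LB_2} is at most the value at this particular $S$, so it suffices to prove
\[
z(\pi)\;\ge\; z(\sigma)+|T_{\sigma_p}|+|\overrightarrow{\Gamma}_{\sigma,p,S}|+|\overleftarrow{\Gamma}_{\sigma,p,\Omega\setminus S}|+|\Gamma_{\sigma,\Omega,S}|+\gamma_{\sigma,p}-c,
\]
because $z(\pi)\ge z(\sigma)$ trivially handles the $\max\{0,\cdot\}$.

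Next, take an optimal magazine sequence for $\pi$ and look at the configuration $M$ used while $\sigma_p$ is processed. Every tool in the four sets $\overrightarrow{\Gamma}$, $\overleftarrow{\Gamma}$, $\Gamma$, and $X:=(T_{\sigma_{\overleftarrow p}}\cap T_{\sigma_{\overrightarrow p}})\setminus T_{\sigma_p}$ is needed by some job strictly before $\sigma_p$ and by some job strictly after it in $\pi$:
\begin{itemize}
\item a tool of $\overrightarrow{\Gamma}$ is needed by a job of $S$ (before) and by some $\sigma_q$ with $q\ge p$; it is not in $T_{\sigma_{\overleftarrow p}}\supseteq T_{\sigma_p}$, so $q>p$;
\item a tool of $\overleftarrow{\Gamma}$ is handled symmetrically;
\item a tool of $\Gamma$ is needed by jobs on both sides;
\item a tool of $X$ is needed by some $\sigma_q$ with $q<p$ and some $\sigma_{q'}$ with $q'>p$.
\end{itemize}
The four sets are pairwise disjoint and disjoint from $T_{\sigma_p}$. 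Indeed, $\overrightarrow{\Gamma}$ avoids $T_{\sigma_{\overleftarrow p}}$, $\overleftarrow{\Gamma}$ avoids $T_{\sigma_{\overrightarrow p}}$, $\Gamma$ avoids $T_\sigma$, and $X\subseteq T_\sigma$ lies in both prefix and suffix tool sets.

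Since $|M|=c$ and $T_{\sigma_p}\subseteq M$, at least $|T_{\sigma_p}|+|\overrightarrow{\Gamma}|+|\overleftarrow{\Gamma}|+|\Gamma|+|X|-c$ of these "bridging" tools are absent from $M$. Each absent tool must be inserted again after $\sigma_p$, which costs one switch per tool. The main work is to separate these forced re-insertions from the $z(\sigma)$ switches. Tools in $\overrightarrow{\Gamma}\cup\overleftarrow{\Gamma}\cup\Gamma$ are handled by a standard argument:
\begin{itemize}
\item tools of $\Gamma$ are not in $T_\sigma$ at all;
\item a tool of $\overrightarrow{\Gamma}$ is loaded at most once when restricting to $\sigma$ (it is needed in $\sigma$ only after position $p$), so its removal and reload in $\pi$ is an extra insertion beyond those counted in the projection onto $\sigma$;
\item the same holds for tools of $\overleftarrow{\Gamma}$.
\end{itemize}
To make this rigorous, I would compare $z(\pi)$ with $|T_\pi|-c$ plus the number of "reloads" and use $z(\sigma)$ as the minimum reload count of $\sigma$ alone. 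In this accounting each tool contributes first loads plus reloads, and restricting $\pi$ to $\sigma$ cannot increase reloads of tools in $T_\sigma$.

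The main obstacle is the $X$ term, which explains why only $\gamma_{\sigma,p}$ is credited rather than $|X|$. Tools in $X$ belong to $T_\sigma$, so a reload of such a tool around $\sigma_p$ may already be counted in $z(\sigma)$. I would bound the number of $X$-reloads that $z(\sigma)$ can already absorb. In the decomposition $z(\sigma)=(\text{number of insertions})-(\text{initial loads})$, the reloads within $\sigma$ equal $z(\sigma)-(|T_\sigma|-c)$, assuming an initially full magazine and $|T_\sigma|\ge c$; the other case is handled by $\max\{\cdot,0\}$ and a zero-reload argument. Hence at most $z(\sigma)-|T_\sigma|+c$ tools of $X$ can have their gap at $\sigma_p$ already paid for in $z(\sigma)$. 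So at least $\max\{|X|-(z(\sigma)-|T_\sigma|+c),0\}=\gamma_{\sigma,p}$ of them are effectively counted as present in $M$ without double counting, or equivalently generate fresh switches. Summing the contributions gives the displayed inequality. I expect this last accounting step, in particular making the reload decomposition of $z(\pi)$ versus $z(\sigma)$ precise with the KTNS/IGA full-magazine convention, to need the most care.
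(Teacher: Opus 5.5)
Your proposal is correct and follows essentially the same argument as the paper. You fix the split $S$ induced by a completion and note that $T_{\sigma_p}$, $\overrightarrow{\Gamma}_{\sigma,p,S}$, $\overleftarrow{\Gamma}_{\sigma,p,\Omega\setminus S}$, $\Gamma_{\sigma,\Omega,S}$ and $(T_{\sigma_{\overleftarrow{p}}}\cap T_{\sigma_{\overrightarrow{p}}})\setminus T_{\sigma_p}$ are disjoint sets of tools bridging $\sigma_p$; you then charge the excess over $c$ as extra switches and cap the credit for the last set by the reload budget $z(\sigma)-|T_\sigma|+c$, exactly as the paper does.

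The step you flag, separating these switches from those in $z(\sigma)$, is left just as informal in the paper. It is best made rigorous by counting unkept gaps between consecutive requests of a tool: for $|T_\sigma|\ge c$, $z(\sigma)$ equals $|T_\sigma|-c$ plus the minimum number of unkept gaps over capacity-feasible choices. Literally restricting the completion's magazine sequence to $\sigma$ does not suffice, because that restriction can still reload a tool of $\overrightarrow{\Gamma}_{\sigma,p,S}$ that merely sat idle in the magazine before $\sigma_p$.
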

\begin{proof}
Consider a job $\sigma_p \in \sigma$ and a subset $\Omega \subseteq \overline{J}_{\sigma}$. Suppose that $\sigma$ is extended by inserting all jobs in some set $S \subseteq \Omega$ before $\sigma_p$ and all remaining jobs in $\Omega \setminus S$ after $\sigma_p$.

First, consider the tools in $\overrightarrow{\Gamma}_{\sigma,p,S}$ (see Eq.~\eqref{LB2_eq_1}). Each such tool must either remain loaded in the magazine during the processing of $\sigma_p$ or induce additional tool switches beyond those already counted in $z(\sigma)$. Indeed, $\overrightarrow{\Gamma}_{\sigma,p,S}$ consists of the tools required by at least one job in $S$, which is processed before $\sigma_p$ by assumption, and by at least one job processed after $\sigma_p$, but not by any job in $\sigma_{\overleftarrow{p}}$. Thus, once a tool in $\overrightarrow{\Gamma}_{\sigma,p,S}$ is loaded to process a job in $S$, it must remain in the magazine until it is required by a job processed after $\sigma_p$, and therefore also during the processing of $\sigma_p$; otherwise, an additional tool switch is necessary. By a symmetric argument, each tool in $\overleftarrow{\Gamma}_{\sigma,p,\Omega\setminus S}$ (see Eq.~\eqref{LB2_eq_2}) must either remain loaded in the magazine during the processing of $\sigma_p$ or induce additional tool switches beyond those already counted in $z(\sigma)$.

Now, consider the tools in $\Gamma_{\sigma,\Omega,S}$ (see Eq.~\eqref{LB2_eq_3}). This set contains the tools required by at least one job in $S$ and by at least one job in $\Omega\setminus S$, but not by any job in $\sigma$. Since the jobs in $S$ are assumed to be processed before $\sigma_p$, whereas the jobs in $\Omega\setminus S$ are assumed to be processed after $\sigma_p$, each tool in $\Gamma_{\sigma,\Omega,S}$ must either remain loaded during the processing of $\sigma_p$ or be subsequently reloaded, thereby inducing an additional tool switch.

Finally, consider the tools in $(T_{\sigma_{\overleftarrow{p}}}\cap T_{\sigma_{\overrightarrow{p}}})\setminus T_{\sigma_p}$. Since at most $z(\sigma)-|T_\sigma|+c$ of these tools can be removed and subsequently reloaded without exceeding the number of tool switches already counted in $z(\sigma)$, at least $\gamma_{\sigma, p}$ (see Eq.~\eqref{LB2_eq_4}) of them must remain loaded during the processing of $\sigma_p$; otherwise, additional tool switches are necessary.

To conclude the proof, observe that, for each $S\subseteq\Omega$, the sets $T_{\sigma_p}$, $\overrightarrow{\Gamma}_{\sigma,p,S}$, $\overleftarrow{\Gamma}_{\sigma,p,\Omega\setminus S}$, $\Gamma_{\sigma,\Omega,S}$, and $(T_{\sigma_{\overleftarrow{p}}}\cap T_{\sigma_{\overrightarrow{p}}})\setminus T_{\sigma_p}$ are mutually disjoint. Consequently, since the subset $S\subseteq\Omega$ of unsequenced jobs that will be processed before $\sigma_p$ is not known at node $\sigma$, at least
$|T_{\sigma_p}|+\min_{S\subseteq\Omega}\bigl(|\overrightarrow{\Gamma}_{\sigma,p,S}|+|\overleftarrow{\Gamma}_{\sigma,p,\Omega\setminus S}|+|\Gamma_{\sigma,\Omega,S}|\bigr)+\gamma_{\sigma,p}$
tools must remain loaded during the processing of $\sigma_p$ to avoid additional tool switches. If this number exceeds the magazine capacity $c$, then at least the excess number of tools cannot remain loaded and must therefore be reloaded later. This induces at least
\[
\max\left\{0,\,
|T_{\sigma_p}|+\min_{S\subseteq\Omega}\bigl(|\overrightarrow{\Gamma}_{\sigma,p,S}|+|\overleftarrow{\Gamma}_{\sigma,p,\Omega\setminus S}|+|\Gamma_{\sigma,\Omega,S}|\bigr)+\gamma_{\sigma,p}-c
\right\}
\]
additional tool switches beyond those already counted in $z(\sigma)$. Hence, $L_2(\sigma,p,\Omega)$ (see Eq.~\eqref{LB_2}) is a valid lower bound on the optimal value of the corresponding subproblem.
\end{proof}

As the computation of $L_2(\sigma,p,\Omega)$ requires enumerating all subsets $S\subseteq\Omega$, it is exponential in $|\Omega|$. To keep the evaluation of the bound computationally efficient, we evaluate $L_2(\sigma,p,\Omega)$ over all positions $p=1,\dots,l$ and all subsets $\Omega\subseteq\overline{J}_{\sigma}$ containing at most two unsequenced jobs. Preliminary computational experiments showed that this choice provides the best trade-off between computational effort and pruning effectiveness. Formally, the second lower bound used at node $\sigma$ is defined as $
L_2(\sigma)
=
\max\left\{
L_2(\sigma,p,\Omega):
p\in\{1,\dots,l\},\
\Omega\subseteq\overline{J}_{\sigma},\
|\Omega|\leq 2
\right\}$.

\subsubsection*{Bound $L_3(\sigma)$}
The third lower bound is based on a graph relaxation of the sequencing decisions. The key observation is that, whenever two jobs $j,j'\in J$ are processed consecutively, the value $w_{j,j'}=\max\{0,|T_j\cup T_{j'}|-c\}$ provides a lower bound on the number of tool switches required between them. These pairwise lower bounds are used as edge weights in a graph associated with the subproblem at a node $\sigma$ of the PI-B\&B tree, where each edge represents the decision to process the corresponding jobs consecutively in some completion of the partial sequence $\sigma$.

More specifically, given a subset $\Omega \subseteq \overline{J}_{\sigma}$ of unsequenced jobs, let $G_{\sigma,\Omega}=(V,E)$ be the weighted graph with vertex set $V=J_{\sigma}\cup\Omega$. The edge set $E$ is obtained from the complete graph on $V$ by removing each edge $\{j,j'\}$ such that $j,j'\in J_{\sigma}$ and $j$ and $j'$ are not consecutive in $\sigma$. These edges are removed because two jobs already sequenced in $\sigma$ but not consecutive in $\sigma$ cannot become consecutive after inserting the jobs in $\Omega$.
For any spanning tree $\mathcal{T}$ of $G_{\sigma,\Omega}$, let $E(\mathcal{T})$ denote its edge set and $w(\mathcal{T})=\sum_{\{j,j'\}\in E(\mathcal{T})} w_{j,j'}$ its total weight. Since every feasible insertion of the jobs in $\Omega$ into $\sigma$ corresponds to a path in $G_{\sigma,\Omega}$ spanning all vertices in $V$, the following bound holds.
\begin{prop}
Given a node $\sigma$ of the PI-B\&B tree and a subset $\Omega \subseteq \overline{J}_{\sigma}$,
\begin{equation}\label{LB_3}
L_3(\sigma,\Omega)
=
\min\{w(\mathcal{T}) : \mathcal{T} \text{ is a spanning tree of } G_{\sigma,\Omega}\}
\end{equation}
provides a valid lower bound on the optimal value of the subproblem associated with node $\sigma$.
\end{prop}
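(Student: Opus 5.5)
Fix an arbitrary completion $\pi$ of $\sigma$, i.e., a permutation of $J$ obtained by inserting the jobs of $\overline{J}_{\sigma}$ into $\sigma$ while preserving the relative order of the jobs of $\sigma$. Since $\pi$ is arbitrary, the statement follows once we show $z(\pi) \geq L_3(\sigma,\Omega)$. The argument has three steps: lower-bound $z(\pi)$ by the sum of consecutive pairwise weights, pass to the subsequence on $V=J_\sigma\cup\Omega$, and observe that this subsequence is a Hamiltonian path of $G_{\sigma,\Omega}$.

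\textbf{Step 1: consecutive weights.} Let $\mathcal{M}_\pi=(M_1,\dots,M_n)$ be an optimal sequence of configurations covering $\pi$. For two consecutive positions $i-1,i$ we have $T_{\pi_{i-1}}\cup T_{\pi_i}\subseteq M_{i-1}\cup M_i$. Since $|M_{i-1}|=c$, this gives $|M_i\setminus M_{i-1}| = |M_{i-1}\cup M_i| - c \geq |T_{\pi_{i-1}}\cup T_{\pi_i}|-c$. As the left side is nonnegative, $|M_i\setminus M_{i-1}|\geq w_{\pi_{i-1},\pi_i}$. Summing over $i$ yields $z(\pi)\geq\sum_{i=2}^n w_{\pi_{i-1},\pi_i}$.

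\textbf{Step 2: restriction to $V$.} This is the main obstacle. Vertices outside $V$, namely the jobs of $\overline{J}_\sigma\setminus\Omega$, may sit between two vertices of $V$, so consecutive jobs of the restricted sequence need not be consecutive in $\pi$. Let $\rho$ be the subsequence of $\pi$ formed by the jobs of $V$. I would bound $z(\pi)\geq z(\rho)$ by monotonicity of the TRP under deletion of jobs: the configurations of $\mathcal{M}_\pi$ at the positions of $\rho$ cover $\rho$. Over a deleted stretch the switch counts telescope, since $|M_b\setminus M_a|\leq\sum_{i=a+1}^{b}|M_i\setminus M_{i-1}|$ because every tool in $M_b\setminus M_a$ enters at some intermediate step. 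Applying Step 1 to $\rho$ then gives $z(\pi)\geq z(\rho)\geq\sum_{k} w_{\rho_{k-1},\rho_k}$. The key point is that Step 1 holds for any job sequence, so applying it to $\rho$ directly avoids the non-consecutiveness issue.

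\textbf{Step 3: $\rho$ is a spanning path of $G_{\sigma,\Omega}$.} Each pair $\{\rho_{k-1},\rho_k\}$ must be an edge of $G_{\sigma,\Omega}$. The only missing edges join two jobs of $J_\sigma$ that are non-consecutive in $\sigma$. If both $\rho_{k-1},\rho_k\in J_\sigma$, then, since the jobs of $\sigma$ keep their relative order in $\pi$ and hence in $\rho$, no job of $\sigma$ lies between them. So they are consecutive in $\sigma$ and the edge exists. Therefore $\rho$ is a Hamiltonian path of $G_{\sigma,\Omega}$, hence a spanning tree $\mathcal{T}$ of it, with $w(\mathcal{T})=\sum_k w_{\rho_{k-1},\rho_k}$. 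Combining the three steps,
\[
z(\pi)\;\geq\; w(\mathcal{T})\;\geq\; L_3(\sigma,\Omega).
\]

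Because $\pi$ was an arbitrary completion, $L_3(\sigma,\Omega)$ is a valid lower bound on the optimal value of the subproblem at node $\sigma$. When $\Omega=\emptyset$, $G_{\sigma,\emptyset}$ is the path $\sigma$ itself and the bound is trivially valid.
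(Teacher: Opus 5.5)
Your proposal is correct and follows essentially the same route as the paper's proof: restrict to the jobs of $J_{\sigma}\cup\Omega$, note that the consecutive pairs of the restricted sequence form a spanning path (hence a spanning tree) of $G_{\sigma,\Omega}$, and use that each weight $w_{j,j'}$ lower-bounds the switches between consecutive jobs. The only difference is that you prove three facts the paper takes for granted: deleting jobs cannot increase $z$ (via $|M_b\setminus M_a|\leq\sum_{i=a+1}^{b}|M_i\setminus M_{i-1}|$), $|M_i\setminus M_{i-1}|\geq w_{\pi_{i-1},\pi_i}$, and the restricted sequence never uses a removed edge of $G_{\sigma,\Omega}$.
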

\begin{proof}
Let $\sigma'$ be an optimal completion of $\sigma$ restricted to the jobs in $J_{\sigma}\cup\Omega$. Then, $z(\sigma')$ is a lower bound on the optimal value of the subproblem at node $\sigma$.
The consecutive pairs of jobs in $\sigma'$ induce a spanning tree $\mathcal{T}'$ of $G_{\sigma,\Omega}$. By Eq.~\eqref{LB_3}, it follows that $L_3(\sigma,\Omega)\leq w(\mathcal{T}')$. Moreover, each edge weight of $\mathcal{T}'$ is a lower bound on the number of tool switches required between the corresponding consecutive jobs of $\sigma'$, and therefore $w(\mathcal{T}')\leq z(\sigma')$. Hence, $L_3(\sigma,\Omega)\leq z(\sigma')$, which proves the claim.
\end{proof}

\begin{algorithm}
\small
\caption{$L_3(\sigma)$ calculation}
\label{alg:MST}
\KwIn{$\sigma$, $\overline{J}_{\sigma}$}
$\Omega = \overline{J}_{\sigma}$\;\label{alg:MST_Step1}
$L_3(\sigma)= L_3(\sigma,\Omega)$\;\label{alg:MST_Step2}
\While{$\Omega\neq\emptyset$}{
   Compute a minimum spanning tree $\mathcal{T}$ of $G_{\sigma,\Omega}$\;\label{alg:MST_Step3}
   $w^* = \min \{w_{j,j'} : \{j,j'\} \in E(\mathcal{T}),\ j \in \Omega\}$\;\label{alg:MST_Step4}
   \For{$j \in \Omega$}{
       $\delta(j) = \left\vert \{\{j,j'\} \in E(\mathcal{T}) : w_{j,j'} = w^*\} \right\vert$\;\label{alg:MST_Step5}
   }
   Select the job $j^* \in \Omega$ with the highest value of $\delta(j^*)$\;\label{alg:MST_Step6}
   $\Omega = \Omega \setminus \{j^*\}$\;\label{alg:MST_Step7}
   \If{$L_3(\sigma,\Omega) > L_3(\sigma)$}{
      $L_3(\sigma) = L_3(\sigma,\Omega)$\;\label{alg:MST_Step8}
   }
}
\Return{$L_3(\sigma)$}
\end{algorithm}

As the number of subsets $\Omega\subseteq \overline{J}_{\sigma}$ grows exponentially with $|\overline{J}_{\sigma}|$, computing $L_3(\sigma,\Omega)$ for each possible subset would be computationally prohibitive. Therefore, the heuristic procedure described in Algorithm~\ref{alg:MST} is used to select a promising subset $\Omega$ and compute the third proposed bound, denoted by $L_3(\sigma)$.
The procedure starts by initializing $\Omega=\overline{J}_{\sigma}$ (step~\ref{alg:MST_Step1}) and setting $L_3(\sigma)=L_3(\sigma,\Omega)$ (step~\ref{alg:MST_Step2}). Then, one job is removed from $\Omega$ at each iteration until $\Omega$ becomes empty. Specifically, a minimum spanning tree $\mathcal{T}$ of the current graph $G_{\sigma,\Omega}$ is computed (step~\ref{alg:MST_Step3}), and the minimum edge weight $w^*$ among the edges of $\mathcal{T}$ incident to a job in $\Omega$ is identified (step~\ref{alg:MST_Step4}). For each job $j\in\Omega$, $\delta(j)$ denotes the number of such incident edges with weight $w^*$ (step~\ref{alg:MST_Step5}). The job $j^*$ with the largest value of $\delta(j^*)$ is then selected and removed from $\Omega$ (steps~\ref{alg:MST_Step6} and~\ref{alg:MST_Step7}). Whenever the new value $L_3(\sigma,\Omega)$ improves the best value found so far, $L_3(\sigma)$ is updated (step~\ref{alg:MST_Step8}). Finally, the procedure returns $L_3(\sigma)$.

The computational complexity of Algorithm~\ref{alg:MST} is polynomial in $n$. Each iteration requires computing $L_3(\sigma,\Omega)$, which can be done in $O\left(n^2\log^*(n)\right)$ time using Kruskal's algorithm (see \citealt{Kruskal1956}), where $\log^*(n)$ is defined in \citet{tarjan1975}. Since at most $n-2$ iterations are performed, the overall time complexity is $O\left(n^3\log^*(n)\right)$.

\subsection{Branching Scheme}\label{sec:branching_scheme}
At level 0 of the tree, the root node corresponds to a partial sequence of two jobs. For each node $\sigma$ at level $r= 0, \dots, n-3$ of the PI-B\&B tree, there are $r+3$ potential branches, each representing a different insertion of a selected unsequenced job $j' \in \overline{J}_{\sigma}$ into $\sigma$ (see Figure~\ref{fig:BB_schema}). Finally, each node at level $n-2$ of the PI-B\&B tree is a leaf and corresponds to a complete sequence of $n$ jobs (i.e., a feasible solution to the SSP).
Note that our implicit enumeration scheme avoids generating both nodes $\sigma$ and $\sigma^R$ during the search. This is because $z(\sigma) = z(\sigma^R)$ (see \citealt{Ghiani2010}). By exploiting this symmetry, the PI-B\&B scheme effectively reduces the search space by half.

\begin{figure}
\centering
    \resizebox{0.6\textwidth}{!}{
\tikzset{
    thick,
    tree node/.style = {align=center, inner sep=0pt, font = \small},
    every label/.append style = {font=\small},
    S/.style = {draw=black!20, ellipse, minimum size = 7mm, inner sep=0pt,
                top color=white, bottom color=black!10},
    ENL/.style = {
font=\small, left=1pt},
ENR/.style = {
font=\small, right=1pt},
grow = down,  
level 1/.style = {sibling distance=4cm},
level 2/.style = {sibling distance=1cm},
level distance = 1.5cm
}
\newcommand\LB{
    \tikz\draw[very thick] (-0.5,0) -- + (1,0);}
\centering
\begin{tikzpicture}[scale=0.8, every node/.style={scale=0.7}]
\node [S] {$(\sigma_1,\sigma_2)$}
    child {node [S] {$(j,\sigma_1,\sigma_2)$}{
    child {node [S] {$\dots$}{}
        edge from parent node[ENL] {}
    }
    child {node [S] {$\dots$}
        edge from parent node[ENL] {}
    }
        child {node [S] {$\dots$}
        edge from parent node[ENL] {}
    }
    child {node [S] {$\dots$}
        edge from parent node[ENL] {}
    }
    }
        edge from parent node[ENL] {}
    }
    child {node [S] {$(\sigma_1,j,\sigma_2)$}{
    child {node [S] {$\dots$}{}
        edge from parent node[ENL] {}
    }
    child {node [S] {$\dots$}
        edge from parent node[ENL] {}
    }
        child {node [S] {$\dots$}
        edge from parent node[ENL] {}
    }
    child {node [S] {$\dots$}
        edge from parent node[ENL] {}
    }}
        edge from parent node[ENL] {}
    }
    child {node [S] {$(\sigma_1,\sigma_2,j)$}{
    child {node [S] {$\dots$}{}
        edge from parent node[ENL] {}
    }
    child {node [S] {$\dots$}
        edge from parent node[ENL] {}
    }
        child {node [S] {$\dots$}
        edge from parent node[ENL] {}
    }
    child {node [S] {$\dots$}
        edge from parent node[ENL] {}
    }}
        edge from parent node[ENL] {}
    };
\end{tikzpicture}
}
\caption{PI-B\&B tree.}\label{fig:BB_schema}
\end{figure}

\subsubsection*{Job Selection}
At the root node of the PI-B\&B tree, the initial partial sequence $\sigma = (\sigma_1, \sigma_2)$ is generated by selecting two jobs $\sigma_1, \sigma_2 \in J$ that maximize $|T_{\sigma_1} \setminus T_{\sigma_2}| + |T_{\sigma_2} \setminus T_{\sigma_1}|$, i.e., the number of tools required by exactly one of the two jobs. This selection aims to maximize the diversity between the first two jobs, potentially leading to a faster increase in the lower bounds of the nodes during the search.
At a node $\sigma = (\sigma_1, \dots, \sigma_l)$ from which we are about to branch, we select a job $j\in \overline{J}_{\sigma}$ such that $\min_{i \in \{1, \dots, l\}} \{\vert T_{\sigma_i} \cup T_{j} \vert\}$ is maximized. Observing that $\left| T_{\sigma_i} \cup T_{j} \right|$ is an approximation of the increment in the objective function due to the insertion of $j$ in $\sigma$ right before or after job $\sigma_i$ ($i=1,\dots, l$), this choice aims to select a job $j$ capable of ensuring that the minimal approximated increment in the objective function, among all child nodes, is as large as possible.

\subsubsection*{Search Strategy}
The PI-B\&B algorithm follows a depth-first search strategy. At a node $\sigma=(\sigma_1,\dots,\sigma_l)$, if the node is not fathomed by the bounding procedures (see Section~\ref{sec:LB}), a job $j\in\overline{J}_{\sigma}$ is selected according to the rule described above. Then, all $l+1$ child nodes obtained by inserting $j$ into $\sigma$ are considered sequentially, from the first insertion position to the last. Once a node is fathomed, the algorithm backtracks and continues with the next insertion position.

\section{The PE-B\&B Algorithm}\label{sec:BBL}
PE-B\&B builds on the enumeration scheme proposed by \citet{LSS2004} and strengthens it with a tighter lower bound and two dominance rules. In this section, we present these enhancements and provide formal proofs of their correctness.

In PE-B\&B, job sequences are constructed from left to right: at each node $\sigma$, one child is generated for each job in $\overline{J}_{\sigma}$ by appending that job to the current partial sequence (see Figure~\ref{fig:BAB_schema}). This prefix-based branching structure makes it possible to exploit symmetries among consecutive jobs that can be processed without requiring any tool switches, retaining only the lexicographically ordered representative of each equivalence class. The PE-B\&B tree is explored depth-first, and the children are considered according to a fixed reference order, defined by the incumbent solution when available and by the original input order otherwise.

\begin{figure}
\centering
\resizebox{0.85\textwidth}{!}{
\tikzset{
    thick,
    tree node/.style = {align=center, inner sep=0pt, font=\small},
    every label/.append style = {font=\small},
    S/.style = {
        draw=black!20,
        ellipse,
        minimum size=7mm,
        inner sep=0pt,
        top color=white,
        bottom color=black!10
    },
    ENL/.style = {font=\small, left=1pt},
    ENR/.style = {font=\small, right=1pt},
    grow = down,
    level 1/.style = {sibling distance=3.8cm},
    level 2/.style = {sibling distance=1.6cm},
    level distance = 2.2cm
}
\centering
\begin{tikzpicture}[scale=0.85, every node/.style={scale=0.8}]
\node[S] {$\emptyset$}
    child {node[S] {$(1)$}
        child {node[S] {$(1,2)$}
            edge from parent node[ENL] {}
        }
        child {node[S] {$\dots$}
            edge from parent node[ENL] {}
        }
        child {node[S] {$(1,j)$}
            edge from parent node[ENL] {}
        }
        child {node[S] {$\dots$}
            edge from parent node[ENL] {}
        }
        child {node[S] {$(1,n)$}
            edge from parent node[ENR] {}
        }
        edge from parent node[ENL] {}
    }
    child {node[S] {$\dots$}
        edge from parent node[ENL] {}
    }
    child {node[S] {$(i)$}
        child {node[S] {$(i,1)$}
            edge from parent node[ENL] {}
        }
        child {node[S] {$\dots$}
            edge from parent node[ENL] {}
        }
        child {node[S] {$(i,j)$}
            edge from parent node[ENL] {}
        }
        child {node[S] {$\dots$}
            edge from parent node[ENL] {}
        }
        child {node[S] {$(i,n)$}
            edge from parent node[ENR] {}
        }
        edge from parent node[ENL] {}
    }
    child {node[S] {$\dots$}
        edge from parent node[ENR] {}
    }
    child {node[S] {$(n)$}
        child {node[S] {$(n,1)$}
            edge from parent node[ENL] {}
        }
        child {node[S] {$\dots$}
            edge from parent node[ENL] {}
        }
        child {node[S] {$(n,j)$}
            edge from parent node[ENL] {}
        }
        child {node[S] {$\dots$}
            edge from parent node[ENL] {}
        }
        child {node[S] {$(n,n-1)$}
            edge from parent node[ENR] {}
        }
        edge from parent node[ENR] {}
    };
\end{tikzpicture}
}
\caption{PE-B\&B tree.}
\label{fig:BAB_schema}
\end{figure}

\subsection{Lower Bounding Function \texorpdfstring{$L_4(\sigma)$}{L4(sigma)}}\label{Sec:LB4}
At a node $\sigma=(\sigma_1,\dots,\sigma_l)$ of the PE-B\&B search tree, the KTNS policy is used to compute an optimal sequence of magazine configurations $\mathcal{M}(\sigma)=(M_1,\dots,M_l)$ and its objective value $z(\sigma)=z(\mathcal{M}(\sigma))$. The objective value and the magazine configurations are then used to evaluate the new lower bound $L_4(\sigma)$ and apply the dominance rules, respectively.

The bound $L(\sigma)$ proposed by \citet{LSS2004} accounts only for the tools required by the last sequenced job $\sigma_l$ and by the unsequenced jobs, namely, the tools in $A_{\sigma}=T_{\sigma_l}\cup\left(\bigcup_{j\in\overline{J}_{\sigma}}T_j\right)$. In our notation, this bound can be written as $L(\sigma)=z(\sigma)+|A_{\sigma}|-c$.
By contrast, the new proposed bound also accounts for the tools in $T_{\sigma}\setminus A_{\sigma}$ that are already present in the magazine when $\sigma_l$ is processed. This refinement yields the following result.
\begin{prop}
A valid lower bound on the optimal value of the subproblem associated with node $\sigma$ of the PE-B\&B tree is $
L_4(\sigma)=z(\sigma)+|A_{\sigma}|-\min\{c,s_{\sigma}\}$,
where $
s_{\sigma}=\left|T_{\sigma}\cap A_{\sigma}\right|+\max\left\{c-\left|T_{\sigma}\right|,0\right\}$.
\end{prop}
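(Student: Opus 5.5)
Fix an arbitrary completion $\sigma'=(\sigma_1,\dots,\sigma_l,\sigma_{l+1},\dots,\sigma_n)$ of $\sigma$ in the PE-B\&B tree and an optimal covering sequence of magazines $(M'_1,\dots,M'_n)$ for $\sigma'$. We show $z(\sigma')\ge L_4(\sigma)$ by splitting the switches into those made up to position $l$ and those made after it:
\[
z(\sigma')=\sum_{i=2}^{l}|M'_i\setminus M'_{i-1}|+\sum_{i=l+1}^{n}|M'_i\setminus M'_{i-1}|.
\]
The prefix $(M'_1,\dots,M'_l)$ covers $\sigma$, so the first sum is at least $z(\sigma)$. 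It remains to bound the second sum by $|A_\sigma|-\min\{c,s_\sigma\}$. The optimal prefix may not end in the configuration $M_l$ computed by KTNS, so the argument must not rely on $M'_l=M_l$.

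Every tool of $A_\sigma$ is needed at some position $\ge l$. Any tool of $A_\sigma\setminus M'_l$ is needed at a position $>l$, because the tools of $T_{\sigma_l}$ lie in $M'_l$. Each such tool must therefore be inserted at least once after position $l$. Hence the second sum is at least $|A_\sigma\setminus M'_l|\ge|A_\sigma|-|M'_l\cap A_\sigma|$. Since $|M'_l|=c$, it suffices to prove $|M'_l\cap A_\sigma|\le s_\sigma$, after possibly modifying the prefix without increasing its cost.

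The key step is an exchange or normalization argument. In the initial load of the first magazine, only required tools cost anything, and free slots may be filled arbitrarily. The counting convention $z=\sum_{i\ge2}$ means the first magazine is free. This is why the convention $\max\{c-|T_\sigma|,0\}$ is needed. A tool $t\in A_\sigma\setminus T_\sigma$ present in $M'_l$ either occupied a free slot from the start or was inserted during the prefix at a cost not charged to requirements of $\sigma$.

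The two cases give the bound as follows.

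\emph{Case $|T_\sigma|\le c$.} At most $c-|T_\sigma|$ slots of $M'_l$ can hold tools outside $T_\sigma$ without lowering the prefix count below $z(\sigma)$. We account for this by showing that the cost of the whole sequence is at least $z(\sigma)$ plus the number of extra tools of $A_\sigma\setminus T_\sigma$ in $M'_l$ beyond $c-|T_\sigma|$. To do this, compare with the KTNS-optimal prefix: replacing a non-$T_\sigma$ tool kept in $M'_l$ by ``not inserting it until needed'' moves exactly one switch from the prefix to the suffix.

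\emph{Case $|T_\sigma|>c$.} The tools of $M'_l\cap A_\sigma$ that are outside $T_\sigma$ must each have been inserted during the prefix while not being needed. Deferring each such insertion until its first use after $l$ does not increase the total. The normalized schedule then has $M'_l\cap A_\sigma\subseteq T_\sigma\cap A_\sigma$, which gives the bound $s_\sigma$ with $\max\{c-|T_\sigma|,0\}=0$.

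The main obstacle is making this normalization rigorous: deferring the insertion of an unneeded tool must not increase the cost. The deferred insertion needs a free slot at its later use, and that slot is provided by whatever tool would have been evicted at that time, so the argument is a standard KTNS-style exchange. Combining the two sums then gives $z(\sigma')\ge z(\sigma)+|A_\sigma|-\min\{c,s_\sigma\}$. The $\min$ with $c$ comes from the trivial bound $|M'_l\cap A_\sigma|\le c$.
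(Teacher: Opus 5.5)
Your decomposition is the paper's. You charge one switch after position $l$ to each tool of $A_\sigma$ absent from the magazine at $\sigma_l$, and cap the number of tools of $A_\sigma$ present there by $\min\{c,s_\sigma\}$; the paper simply asserts this cap. You correctly see that for a fixed optimal covering the termwise split (prefix $\ge z(\sigma)$, suffix $\ge |A_\sigma|-\min\{c,s_\sigma\}$) can fail. However, your normalization does not repair it.

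In the case $|T_\sigma|>c$ you assert that every tool of $K=M'_l\cap(A_\sigma\setminus T_\sigma)$ was inserted during the prefix. This is false: such a tool can sit in $M'_1$ (the initial load is free) and stay until $l$. Deferring it then removes no prefix switch and adds one suffix switch. Concretely, take $c=3$, $T_{\sigma_i}=\{i\}$ for $i=1,\dots,4$, and one unsequenced job requiring $\{5\}$, so $z(\sigma)=1$, $A_\sigma=\{4,5\}$ and $s_\sigma=1$. The covering $M'_1=M'_2=\{1,2,5\}$, $M'_3=\{2,3,5\}$, $M'_4=M'_5=\{3,4,5\}$ is optimal, with 2 switches, since at least $5-3$ tools must be inserted. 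Its suffix costs $0<|A_\sigma|-s_\sigma=1$, and tool $5$ was never inserted. Deferring tool $5$ with everything else unchanged gives 3 switches. Your case $|T_\sigma|\le c$ has the same flaw, since there is no prefix switch to ``move''. The obstacle you single out, a free slot at the later use, is not the real difficulty.

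The missing idea is that, when $|T_\sigma|>c$, a tool of $K$ occupying a slot throughout the prefix must be paid for by some other prefix switch. You can prove this directly, without normalizing.

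Delete $K$ from $M'_1,\dots,M'_l$; the result still covers $\sigma$. Each tool of $K$ not lying in all of $M'_1,\dots,M'_l$ loses the insertion that starts its last run, which occurs at a position $\ge 2$. Let $k_1$ be the number of tools of $K$ lying in all of $M'_1,\dots,M'_l$; their removal frees $k_1$ slots at every position.

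If $|T_\sigma|>c$, then $|T_\sigma\setminus M'_1|\ge |T_\sigma|-c+k_1>k_1$. You can therefore keep $k_1$ distinct tools of $T_\sigma\setminus M'_1$ in the magazine from position $1$ until their first insertion, which removes $k_1$ further insertions. Padding to full magazines does not increase the count, so the prefix cost is $\ge z(\sigma)+|K|$.

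If $|T_\sigma|\le c$, counting suffices. The prefix magazines contain at least $|T_\sigma|+|K|$ distinct tools, only $c$ of which are in $M'_1$, so the prefix cost is $\ge |T_\sigma|+|K|-c$.

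In both cases, adding the suffix bound $|A_\sigma\setminus M'_l|\ge |A_\sigma|-|T_\sigma\cap A_\sigma|-|K|$ yields $z(\sigma)+|A_\sigma|-s_\sigma$. Separately, prefix $\ge z(\sigma)$ together with $|M'_l\cap A_\sigma|\le c$ yields $z(\sigma)+|A_\sigma|-c$, and the two bounds give the statement.
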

\begin{proof}
Let $\overline{\sigma}$ be an arbitrary completion of $\sigma$. In $\overline{\sigma}$, every tool in $A_\sigma$ that is not present in the magazine while $\sigma_l$ is processed will contribute one additional tool switch beyond those already accounted for by $z(\sigma)$. It therefore remains to show that the number of tools in $A_\sigma$ that may already be present at that point is at most $\min\{c,s_\sigma\}$.
These tools may include those in $T_{\sigma}\cap A_\sigma$ together with at most $\max\{c-|T_{\sigma}|,0\}$ additional tools in $A_\sigma\setminus T_{\sigma}$ occupying free magazine slots. Hence, since the magazine capacity is $c$, at most $\min\{c,s_\sigma\}$ tools in $A_\sigma$ may be present while $\sigma_l$ is processed, which completes the proof.
\end{proof}

\subsection{Dominance Rules}\label{sec:BBL_symmetry}
The prefix structure of PE-B\&B makes it possible to reduce the number of symmetric solutions explored during the search. As observed by \citet{Akhundov2024}, one source of symmetry in the SSP arises when several jobs can be processed between two consecutive tool switches. In this case, different permutations of these jobs may yield solutions requiring the same number of tool switches. To break this symmetry, PE-B\&B keeps, among equivalent solutions, only the sequence in which consecutive interchangeable jobs are ordered lexicographically by their indices.

The first dominance rule directly implements this symmetry-breaking principle. In particular, when the last two jobs of a partial sequence are processed under the same magazine configuration, their order can be fixed lexicographically without excluding any potentially optimal solution.
\begin{prop}
Let $\sigma=(\sigma_1,\dots,\sigma_l)$ be a node of the PE-B\&B tree, and let $\mathcal{M}_{\sigma}=(M_1,\dots,M_l)$ be an optimal sequence of magazine configurations covering $\sigma$, computed according to the KTNS policy. Suppose that $M_{l-1}=M_l$ and $\sigma_{l-1}>\sigma_l$. Then, node $\sigma$ is dominated by $\sigma'=(\sigma_1,\dots,\sigma_{l-2},\sigma_l,\sigma_{l-1})$.
\end{prop}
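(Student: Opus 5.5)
We show that every completion of $\sigma$ can be matched by a completion of $\sigma'$ with no more tool switches; hence the subtree rooted at $\sigma$ contains no solution strictly better than the best one in the subtree rooted at $\sigma'$. Since $\sigma'$ is lexicographically smaller, it is explored and kept, so pruning $\sigma$ loses nothing. Concretely, let $\overline{\sigma}=(\sigma_1,\dots,\sigma_{l-2},\sigma_{l-1},\sigma_l,\tau_1,\dots,\tau_{n-l})$ be any completion of $\sigma$. Let $\overline{\sigma}'$ be the sequence obtained from it by swapping positions $l-1$ and $l$; this is a completion of $\sigma'$. The goal is $z(\overline{\sigma}')\le z(\overline{\sigma})$; by symmetry of the argument we will in fact obtain equality.

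The first step uses the hypothesis $M_{l-1}=M_l$. Write $M:=M_{l-1}=M_l$. Then $T_{\sigma_{l-1}}\cup T_{\sigma_l}\subseteq M$, so $|T_{\sigma_{l-1}}\cup T_{\sigma_l}|\le c$ and the two jobs can share a single magazine configuration.

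The key step is an exchange argument on magazine configurations. Take an optimal covering sequence $\mathcal{N}=(N_1,\dots,N_n)$ for $\overline{\sigma}$, so $z(\mathcal{N})=z(\overline{\sigma})$. We want a covering of $\overline{\sigma}'$ of no greater cost. Simply swapping $N_{l-1}$ and $N_l$ can increase the cost, so instead we modify $\mathcal{N}$ to make positions $l-1$ and $l$ identical, which makes the swap free. Define $\mathcal{N}''$ by replacing both $N_{l-1}$ and $N_l$ with the single configuration $N_l$, padded if needed.

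The obstacle is that $N_l$ need not contain $T_{\sigma_{l-1}}$, so this replacement might not cover $\sigma_{l-1}$. Two alternatives avoid this.

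First, note that $\overline{\sigma}$ and $\overline{\sigma}'$ differ only by permuting a block of two jobs. Contracting that block into a single pseudo-job with toolset $T_{\sigma_{l-1}}\cup T_{\sigma_l}$, which has size at most $c$, gives a merged sequence $\hat\sigma$ of length $n-1$. We would show $z(\overline{\sigma})=z(\hat\sigma)$, and the same argument gives $z(\overline{\sigma}')=z(\hat\sigma)$, which yields the desired equality. The inequality $z(\hat\sigma)\le z(\overline{\sigma})$ is immediate in the direction of duplicating: any covering of $\hat\sigma$ yields a covering of $\overline{\sigma}$ of equal cost by repeating the merged configuration, so $z(\overline{\sigma})\le z(\hat\sigma)$. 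The reverse inequality $z(\hat\sigma)\le z(\overline{\sigma})$ is not generally true in isolation and is the crux.

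This is where the KTNS information enters: the KTNS solution for the prefix already keeps all of $T_{\sigma_{l-1}}\cup T_{\sigma_l}$ loaded through both positions at no extra cost relative to the prefix optimum. The second alternative therefore avoids general TRP exchange claims and argues directly. Using the prefix-optimality of KTNS, in which configurations are forward-determined and $M$ is the KTNS configuration at $\sigma_{l-1}$, we show that some optimal covering of $\overline{\sigma}$ can be chosen to agree with $\mathcal{M}_\sigma$ on the prefix. Here we rely on the property, standard for KTNS and Belady-type policies, that the configuration at any time depends only on the past and on next-use times, and that ties in eviction can be broken consistently. Once $N_{l-1}=N_l=M$, swapping $\sigma_{l-1}$ and $\sigma_l$ leaves every configuration valid and every switch count unchanged, so $z(\overline{\sigma}')\le z(\overline{\sigma})$.

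Establishing that an optimal covering of the full completion can inherit $N_{l-1}=N_l$ is the step I expect to be hardest. KTNS decisions for the prefix may change once future jobs are known, so it may be necessary to argue via the merged pseudo-job and the constraint $|T_{\sigma_{l-1}}\cup T_{\sigma_l}|\le c$.
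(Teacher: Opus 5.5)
Your outer reduction, pairing each completion $\overline{\sigma}$ with the swapped completion $\overline{\sigma}'$ and proving $z(\overline{\sigma}')\le z(\overline{\sigma})$, matches the paper's. The argument is not closed, however. Both of your routes reduce to one lemma: some optimal covering $\mathcal{N}$ of $\overline{\sigma}$ already satisfies $T_{\sigma_l}\subseteq N_{l-1}$ (equivalently $z(\hat\sigma)\le z(\overline{\sigma})$). You leave this lemma unproved, and the route you sketch for it is false. An optimal covering of $\overline{\sigma}$ cannot in general agree with $\mathcal{M}_\sigma$ on the prefix, and even $N_{l-1}=N_l=M$ is in general incompatible with optimality.

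Take $c=3$, $l=3$, $T_{\sigma_1}=\{p,q\}$, $T_{\sigma_2}=\{r,s\}$, $T_{\sigma_3}=\{r\}$. KTNS may return $M_1=\{p,q,r\}$ and, breaking the tie between $p$ and $q$ (neither is needed again in the prefix), $M_2=M_3=\{p,r,s\}$, so the hypotheses hold. Now append $\sigma_4$ with $T_{\sigma_4}=\{q,r,s\}$. Any covering with $N_2=N_3=M$ costs at least $2$, while $(\{p,q,r\},\{q,r,s\},\{q,r,s\},\{q,r,s\})$ costs $1$. The prefix configurations are pinned down only on tools still needed within the prefix; choices among tools that are dead in the prefix can be wrong for the completion.

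Your side claim of equality is also false, though you do not need it. With $c=2$, $T_{\sigma_1}=\{u,v\}$, $T_{\sigma_2}=\{w\}$, $T_{\sigma_3}=\{v\}$, $T_{\sigma_4}=\{u\}$, KTNS gives $M_2=M_3=\{v,w\}$. Yet $z(\overline{\sigma})=2$ and $z(\overline{\sigma}')=1$, so in particular $z(\overline{\sigma}')\neq z(\hat\sigma)$.

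The missing idea, which is the paper's argument, is to run KTNS on the whole completion $\overline{\sigma}$ and compare it with $\mathcal{M}_\sigma$ only on live tools. Your remark that KTNS decisions depend only on the past and on next-use times is the right ingredient, but it must be restricted to those tools.

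Appending jobs after position $l$ leaves unchanged the next-use times of tools required at positions up to $l$. Each such tool has higher keeping priority under KTNS than every tool whose next use lies after $l$ or never occurs. Argue by induction on $k=1,\dots,l-1$, breaking ties among live tools as in $\mathcal{M}_\sigma$. The invariant is that $\overline{M}_k$ contains exactly the same tools as $M_k$ among those required at some position in $\{k+1,\dots,l\}$; the number of dead slots and the number of insertions coincide at every step.

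At $k=l-1$ this gives $T_{\sigma_l}=T_{\sigma_l}\cap M_{l-1}\subseteq\overline{M}_{l-1}$. Hence KTNS makes no switch at position $l$, and $\overline{M}_{l-1}=\overline{M}_l$. These are equal to each other but not necessarily to $M$; in the first example both are $\{q,r,s\}$. The KTNS sequence for $\overline{\sigma}$ therefore covers $\overline{\sigma}'$ unchanged, giving $z(\overline{\sigma}')\le z(\overline{\sigma})$. No pseudo-job contraction and no agreement with $\mathcal{M}_\sigma$ on the whole prefix is required.
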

\begin{proof}
Consider any completion $\overline{\sigma}=(\sigma_1,\dots,\sigma_l,\sigma_{l+1},\dots,\sigma_n)$ of $\sigma$. To prove the result, we show that there exists a corresponding completion $\overline{\sigma}'$ of $\sigma'$ such that $z(\overline{\sigma}')\leq z(\overline{\sigma})$.

Let $\mathcal{M}_{\overline{\sigma}}=(\overline{M}_1,\dots,\overline{M}_n)$ be the sequence of magazine configurations constructed by KTNS for $\overline{\sigma}$. Since $M_{l-1}=M_l$, all tools required by $\sigma_l$ can be kept in the magazine while processing $\sigma_{l-1}$. This property is preserved in $\overline{\sigma}$. Indeed, in any such completion, the tools required by $\sigma_l$ still have priority over tools required only by later jobs. Therefore, KTNS keeps these tools loaded while processing $\sigma_{l-1}$, and $\overline{M}_{l-1}=\overline{M}_l$.

Now consider the complete sequence $\overline{\sigma}'=(\sigma_1,\dots,\sigma_{l-2},\sigma_l,\sigma_{l-1}, \sigma_{l+1},\dots,\sigma_n)$, which belongs to the subtree rooted at $\sigma'$. Since $\overline{M}_{l-1}=\overline{M}_l$, $\mathcal{M}_{\overline{\sigma}}$ also covers $\overline{\sigma}'$. Thus, $z(\overline{\sigma}')\leq z(\mathcal{M}_{\overline{\sigma}}) =z(\overline{\sigma})$.
Hence, the optimal value of the subproblem associated with $\sigma'$ is not larger than that of the subproblem associated with $\sigma$. Therefore, $\sigma$ is dominated by $\sigma'$.
\end{proof}

The second dominance rule extends this symmetry argument to consecutive jobs processed under different magazine configurations. The two jobs can still be exchanged when the configuration associated with the latter also covers the former and every tool required by the former is needed again at or after the latter job in the current partial sequence.
\begin{prop}
Let $\sigma=(\sigma_1,\dots,\sigma_l)$ be a node of the PE-B\&B tree, and let $\mathcal{M}_{\sigma}=(M_1,\dots,M_l)$ be an optimal sequence of magazine configurations covering $\sigma$, computed according to the KTNS policy. Suppose that there exists an index $p\in\{2,\dots,l\}$ such that $T_{\sigma_{p-1}}\subseteq M_p$, $T_{\sigma_{p-1}}\subseteq\bigcup_{k=p}^{l}T_{\sigma_k}$, and $\sigma_{p-1}>\sigma_p$. Then, node $\sigma$ is dominated by $\sigma'=(\sigma_1,\dots,\sigma_{p-2},\sigma_p,\sigma_{p-1}, \sigma_{p+1},\dots,\sigma_l)$.
\end{prop}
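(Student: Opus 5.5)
The plan is to follow the pattern of the proof of the first dominance rule. I will map every completion of $\sigma$ to a completion of $\sigma'$ whose number of tool switches is not larger. Let $\overline{\sigma}=(\sigma_1,\dots,\sigma_l,\sigma_{l+1},\dots,\sigma_n)$ be any completion of $\sigma$, and let $\overline{\sigma}'$ be the sequence obtained from $\overline{\sigma}$ by exchanging $\sigma_{p-1}$ and $\sigma_p$. Then $\overline{\sigma}'$ lies in the subtree rooted at $\sigma'$. Let $\mathcal{M}_{\overline{\sigma}}=(\overline{M}_1,\dots,\overline{M}_n)$ be the KTNS configurations of $\overline{\sigma}$. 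For $\overline{\sigma}'$ I will use the candidate sequence of configurations
\[
\mathcal{M}'=(\overline{M}_1,\dots,\overline{M}_{p-2},\overline{M}_p,\overline{M}_p,\overline{M}_{p+1},\dots,\overline{M}_n),
\]
in which both swapped jobs are processed under $\overline{M}_p$.

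The cost comparison is easy. All transitions of $\mathcal{M}'$ coincide with those of $\mathcal{M}_{\overline{\sigma}}$ except the first two after $\overline{M}_{p-2}$. For these, the elementary inclusion $A\setminus C\subseteq (A\setminus B)\cup(B\setminus C)$ gives
\[
|\overline{M}_p\setminus\overline{M}_{p-2}|+|\overline{M}_p\setminus\overline{M}_p|\le|\overline{M}_{p-1}\setminus\overline{M}_{p-2}|+|\overline{M}_p\setminus\overline{M}_{p-1}|.
\]
If $p=2$, the first term simply disappears. Hence $z(\mathcal{M}')\le z(\mathcal{M}_{\overline{\sigma}})=z(\overline{\sigma})$. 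Coverage holds trivially at every position except position $p$ of $\overline{\sigma}'$, where $\sigma_{p-1}$ is processed under $\overline{M}_p$. So everything reduces to showing $T_{\sigma_{p-1}}\subseteq\overline{M}_p$; then $z(\overline{\sigma}')\le z(\mathcal{M}')\le z(\overline{\sigma})$, which proves the dominance.

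The main obstacle is proving $T_{\sigma_{p-1}}\subseteq\overline{M}_p$ in the completed sequence. The hypothesis only gives $T_{\sigma_{p-1}}\subseteq M_p$ for the partial sequence. I will argue that KTNS decisions at position $p$ are preserved when jobs are appended. KTNS at position $p$ keeps the tools of $T_{\sigma_p}$ and then ranks the remaining tools by the position of their next use. By the hypothesis $T_{\sigma_{p-1}}\subseteq\bigcup_{k=p}^{l}T_{\sigma_k}$, every tool of $T_{\sigma_{p-1}}$ has its next use at some position in $\{p,\dots,l\}$, i.e.\ inside the prefix. Appending $\sigma_{l+1},\dots,\sigma_n$ leaves these next-use times unchanged. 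It can only create finite next-use times, strictly after $l$, for tools that previously had none. Therefore such tools rank strictly below every tool of $T_{\sigma_{p-1}}$.

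Moreover, the magazine content entering position $p$ is the same in both runs. I will show this by a short induction over positions $1,\dots,p$: at each position, the KTNS ranking among the tools relevant to the decision is unaffected, up to consistent tie-breaking. Hence the tools of $T_{\sigma_{p-1}}$, which KTNS kept at position $p$ for $\sigma$, are still kept for $\overline{\sigma}$.

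I expect the delicate point to be tie-breaking and the tools with no next use in $\sigma$. If this induction becomes messy, the fallback is to avoid KTNS on $\overline{\sigma}$ altogether. In that case I would take the KTNS configurations of $\sigma$ for positions $1,\dots,l$ and extend them optimally to $\overline{\sigma}$, arguing by an exchange argument that some optimal covering of $\overline{\sigma}$ agrees with $\mathcal{M}_\sigma$ at position $p$.
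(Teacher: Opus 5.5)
Your route is the paper's proof: the same map $\overline{\sigma}\mapsto\overline{\sigma}'$, the same covering obtained by replacing $\overline{M}_{p-1}$ with $\overline{M}_p$, the same inequality, and the same KTNS-priority argument for $T_{\sigma_{p-1}}\subseteq\overline{M}_p$. The paper states that last step in one sentence. There is, however, one step that fails as written: the claim that the magazine content entering position $p$ is the same for $\sigma$ and $\overline{\sigma}$. When KTNS must evict one of several tools that are never needed again in $\sigma$, the choice in $\sigma$ is made by tie-breaking. In $\overline{\sigma}$, the same choice is made by which tool is used first after position $l$, so earlier evictions can differ.

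For example, take $m=4$, $c=3$, $T_{\sigma_1}=\{1,2\}$, $T_{\sigma_2}=\{3,4\}$, $T_{\sigma_3}=\{3\}$, $T_{\sigma_4}=\{4\}$ and $p=3$, with job indices chosen so that $\sigma_2>\sigma_3$; all hypotheses hold. At position $2$, KTNS on $\sigma$ evicts one of tools $1,2$, neither of which is needed again. Say it keeps tool $1$ (otherwise swap the roles of $1$ and $2$), so $M_2=M_3=\{1,3,4\}$. Appending a job with toolset $\{2\}$ gives $\overline{M}_2=\overline{M}_3=\{2,3,4\}$ and $z(\overline{\sigma})=1$. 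Your fallback overclaims in the same way. Any covering of this $\overline{\sigma}$ with $\{1,3,4\}$ at position $3$ needs one switch by position $2$ and another to reload tool $2$. Hence no optimal covering of $\overline{\sigma}$ agrees with $\mathcal{M}_\sigma$ at position $p$.

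What does hold, and is all you need, is agreement on the tools still needed inside the prefix. Let $F_i=\bigcup_{k=i+1}^{l}T_{\sigma_k}$ and prove by induction on $i\le p$ that $M_i\cap F_i=\overline{M}_i\cap F_i$. For the step, use the following facts.
\begin{enumerate}
\item Since $T_{\sigma_{i+1}}\subseteq F_i$, both runs load the same tools at position $i+1$, and both must retain $c-|T_{\sigma_{i+1}}|$ of the remaining tools.
\item Since $F_{i+1}\subseteq F_i$, the removal candidates lying in $F_{i+1}$ are the same in both runs and have the same next-use positions.
\item These candidates outrank every other candidate in both runs.
\end{enumerate}
Hence the retained tools of $F_{i+1}$ coincide, provided that in the run on $\overline{\sigma}$ you break ties among tools needed by the same job exactly as in the run on $\sigma$. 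This is legitimate, since KTNS is optimal under any tie-breaking. The base case is the same argument applied to the initial filling of the magazine. At $i=p$, the hypotheses give $T_{\sigma_{p-1}}\setminus T_{\sigma_p}\subseteq M_p\cap F_p=\overline{M}_p\cap F_p$, hence $T_{\sigma_{p-1}}\subseteq\overline{M}_p$. The rest of your argument then goes through unchanged, and the fallback is unnecessary.
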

\begin{proof}
Consider any completion $\overline{\sigma}=(\sigma_1,\dots,\sigma_l,\sigma_{l+1},\dots,\sigma_n)$ of $\sigma$. To prove the result, we show that there exists a corresponding completion $\overline{\sigma}'$ of $\sigma'$ such that $z(\overline{\sigma}')\leq z(\overline{\sigma})$.

Let $\mathcal{M}_{\overline{\sigma}}=(\overline{M}_1,\dots,\overline{M}_n)$ be the corresponding sequence of magazine configurations constructed by KTNS for $\overline{\sigma}$. Since $T_{\sigma_{p-1}}\subseteq M_p$, all tools required by $\sigma_{p-1}$ remain loaded while $\sigma_p$ is processed. The same holds in any completion $\overline{\sigma}$. Indeed, each of these tools is required again by some job between positions $p$ and $l$, and therefore has priority under KTNS over any tool required only after position $l$. Hence, appending jobs after $\sigma_l$ cannot cause any of the tools in $T_{\sigma_{p-1}}$ to be removed before $\sigma_p$ is processed, and therefore $T_{\sigma_{p-1}}\subseteq\overline{M}_p$.

Now consider the complete sequence $\overline{\sigma}'=(\sigma_1,\dots,\sigma_{p-2},\sigma_p,\sigma_{p-1}, \sigma_{p+1},\dots,\sigma_n)$, which belongs to the subtree rooted at $\sigma'$. Let $\mathcal{M}_{\overline{\sigma}'}$ be the sequence obtained from $\mathcal{M}_{\overline{\sigma}}$ by replacing $\overline{M}_{p-1}$ with $\overline{M}_p$, and leaving all other configurations unchanged. Observe that $\mathcal{M}_{\overline{\sigma}'}$ covers $\overline{\sigma}'$, and that $z(\mathcal{M}_{\overline{\sigma}'})\leq z(\mathcal{M}_{\overline{\sigma}})$, since
$
|\overline{M}_{p}\setminus\overline{M}_{p-2}|+|\overline{M}_{p}\setminus\overline{M}_{p}|\leq|\overline{M}_{p-1}\setminus\overline{M}_{p-2}|+|\overline{M}_{p}\setminus\overline{M}_{p-1}|$, 
while all other transitions are unchanged. Thus, $z(\overline{\sigma}')\leq z(\overline{\sigma})$, which proves that the optimal value of the subproblem associated with $\sigma'$ is not larger than that of the subproblem associated with $\sigma$. Therefore, $\sigma$ is dominated by $\sigma'$.
\end{proof}

\section{The PI-B\&B-H Algorithm}\label{sec:Heuristic}
In this section, we introduce PI-B\&B-H, which is invoked during the preprocessing phase (see Section~\ref{sec:ExactAlgorithm}) to compute an initial high-quality upper bound within a limited computational time. PI-B\&B-H is derived from PI-B\&B, but differs from it in three main aspects: it uses a different search strategy, includes a completion pruning rule, and applies an aggressive fathoming rule to discard branches unlikely to lead to high-quality solutions. These three components are described in the remainder of this section.

The different search strategy and the completion pruning rule do not compromise the exactness of the algorithm; rather, they are introduced to find a high-quality incumbent solution more quickly. These features are not implemented in PI-B\&B, since a high-quality incumbent solution is already provided by the preprocessing phase. In contrast to these two components, the additional fathoming rule may discard branches that are unlikely to lead to high-quality solutions. Therefore, when this fathoming rule is applied, some feasible solutions may be discarded during the search, and the resulting method is no longer exact.

Unlike PI-B\&B, which computes only the value $z(\sigma)$ at each node $\sigma$ using IGA (see Section~\ref{sec:LB}), the three novel features of PI-B\&B-H also require an optimal sequence of magazine configurations $\mathcal{M}_{\sigma}$, which is computed using the KTNS policy.

\subsection{Search Strategy}
PI-B\&B-H also follows a depth-first search strategy. At a node $\sigma=(\sigma_1,\dots,\sigma_l)$, if the node is not fathomed by the bounding or fathoming procedures, a job $j'\in\overline{J}_{\sigma}$ is selected according to the rule described in Section~\ref{sec:branching_scheme}, and all $l+1$ child nodes obtained by inserting $j'$ into $\sigma$ are generated. Among these child nodes, PI-B\&B-H first explores the one obtained by inserting $j'$ after the earliest position in $\operatorname*{arg\,min}_{i\in\{0,\dots,l\}} \left\{\min\left\{|M_i\cup T_{j'}|,\ |M_{i+1}\cup T_{j'}|\right\}\right\}$, where $M_0=M_{l+1}=T$. This choice favors insertion positions where the selected job is compatible with at least one of the neighboring magazine configurations, and is therefore expected to limit the number of additional tool switches.

\subsection{Completion Pruning Rule}
The completion pruning rule allows PI-B\&B-H to prune a node $\sigma$ by identifying a complete solution in the subtree rooted at $\sigma$ whose cost equals that of the current partial sequence, $z(\sigma)$.
Suppose that each job $j\in\overline{J}_{\sigma}$ is covered by at least one configuration in $\mathcal{M}_{\sigma}$. Then, all unsequenced jobs can be inserted into $\sigma$ without increasing the number of tool switches. In particular, a complete sequence $\sigma^*$ satisfying $z(\sigma^*)=z(\sigma)$ can be constructed by iteratively inserting each job $j\in\overline{J}_{\sigma}$ next to a sequenced job whose associated magazine configuration covers $j$. Since $z(\sigma)$ is a lower bound on the cost of every completion of $\sigma$, $\sigma^*$ is optimal among all such completions. The incumbent can therefore be updated with $\sigma^*$, and node $\sigma$ can be pruned.

\subsection{An Almost Exact Fathoming Rule}\label{sec:nonexactrule}
The following fathoming rule is introduced to reduce the number of symmetric solutions explored by PI-B\&B-H, and is based on an argument similar to the one used in Section~\ref{sec:BBL_symmetry}.
Specifically, let $\sigma=(\sigma_1,\dots,\sigma_l)$ be a node of the PI-B\&B-H tree, and let $\mathcal{M}_{\sigma}=(M_1,\dots,M_l)$ be an optimal sequence of magazine configurations covering $\sigma$. Suppose that there exists an index $i\in\{1,\dots,l-1\}$ such that $T_{\sigma_i}\subseteq M_{i+1}$ and $\sigma_i>\sigma_{i+1}$. Then, $\mathcal{M}_{\sigma}$ also covers the sequence $\sigma'=(\sigma_1,\dots,\sigma_{i-1},\sigma_{i+1},\sigma_i,\sigma_{i+2},\dots,\sigma_l)$, obtained by swapping the two consecutive jobs $\sigma_i$ and $\sigma_{i+1}$. Hence, $z(\sigma')\leq z(\sigma)$. Therefore, PI-B\&B-H fathoms node $\sigma$, avoiding the exploration of a branch that is locally dominated by $\sigma'$. However, this rule is not exact, since the local dominance relation between $\sigma$ and $\sigma'$ does not necessarily extend to all their possible completions.

\begin{figure}
    \centering
    \resizebox{0.9\textwidth}{!}{
    \begin{tikzpicture}[
        font=\small,
        mag/.style={
            matrix of nodes,
            nodes={
                draw=black!55,
                minimum width=0.75cm,
                minimum height=0.42cm,
                anchor=center,
                font=\small
            },
            column sep=-\pgflinewidth,
            row sep=-\pgflinewidth,
            draw=black!70,
            rounded corners=1pt,
            fill=white
        },
        panel/.style={
            draw=black!25,
            rounded corners=3pt,
            fill=black!2,
            inner xsep=18pt,
            inner ysep=16pt
        },
        pruned/.style={
            draw=red!70!black,
            rounded corners=3pt,
            fill=red!4,
            inner xsep=18pt,
            inner ysep=16pt
        },
        transition/.style={
            -{Latex[length=2mm]},
            thick,
            draw=black!60
        },
        badtransition/.style={
            -{Latex[length=2mm]},
            thick,
            draw=red!70!black
        }
    ]
    \def\xstep{1.35}
    \def\xgap{5.60}
    \def\ybase{0}
    \matrix[mag] (mat1) at (0,\ybase) {2\\3\\4\\};
    \matrix[mag] (mat2) at (\xstep,\ybase) {2\\3\\4\\};
    \matrix[mag] (mat3) at (2*\xstep,\ybase) {2\\3\\1\\};
    \matrix[mag] (mat4) at (3*\xstep,\ybase) {2\\3\\1\\};
    \draw[badtransition]
    (mat2-3-1.east) to[out=0,in=180] (mat3-3-1.west);

    \node[above=8pt of mat1] (lm1) {$M_1,$};
    \node[above=8pt of mat2] (lm2) {$M_2,$};
    \node[above=8pt of mat3] (lm3) {$M_3,$};
    \node[above=8pt of mat4] (lm4) {$M_4$};
    \node[left=1pt of lm1] (lmprefix) {$\mathcal{M}_{\sigma}=($};
    \node[right=1pt of lm4] (lmsuffix) {$)$};
    \node[below=8pt of mat1] (ls1) {$4,$};
    \node[below=8pt of mat2] (ls2) {$1,$};
    \node[below=8pt of mat3] (ls3) {$2,$};
    \node[below=8pt of mat4] (ls4) {$3$};
    \node[left=1pt of ls1] (lsprefix) {$\sigma=($};
    \node[right=1pt of ls4] (lssuffix) {$)$};
    \begin{scope}[on background layer]
        \node[
            panel,
            fit=(lmprefix)(lmsuffix)(mat1)(mat4)(lsprefix)(lssuffix)
        ] (leftpanel) {};
    \end{scope}
    \node[below=5pt of leftpanel] {(a)};
    \def\xstartR{3*\xstep+\xgap+1.2}
    \matrix[mag] (mat5) at (\xstartR,\ybase) {1\\3\\4\\};
    \matrix[mag] (mat6) at (\xstartR+\xstep,\ybase) {1\\3\\4\\};
    \matrix[mag] (mat7) at (\xstartR+2*\xstep,\ybase) {1\\3\\2\\};
    \matrix[mag] (mat8) at (\xstartR+3*\xstep,\ybase) {1\\3\\2\\};
    \matrix[mag] (mat9) at (\xstartR+4*\xstep,\ybase) {1\\3\\2\\};
        \draw[badtransition]
    (mat6-3-1.east) to[out=0,in=180] (mat7-3-1.west);

    \node[above=8pt of mat5] (rm1) {$M_1,$};
    \node[above=8pt of mat6] (rm2) {$M_2,$};
    \node[above=8pt of mat7] (rm3) {$M_3,$};
    \node[above=8pt of mat8] (rm4) {$M_4,$};
    \node[above=8pt of mat9] (rm5) {$M_5$};
    \node[left=1pt of rm1] (rmprefix) {$\mathcal{M}_{\sigma^*}=($};
    \node[right=1pt of rm5] (rmsuffix) {$)$};
    \node[below=8pt of mat5] (rs1) {$4,$};
    \node[below=8pt of mat6] (rs2) {$5,$};
    \node[below=8pt of mat7] (rs3) {$1,$};
    \node[below=8pt of mat8] (rs4) {$2,$};
    \node[below=8pt of mat9] (rs5) {$3$};
    \node[left=1pt of rs1] (rsprefix) {$\sigma^*=($};
    \node[right=1pt of rs5] (rssuffix) {$)$};
    \begin{scope}[on background layer]
        \node[
            pruned,
            fit=(rmprefix)(rmsuffix)(mat5)(mat9)(rsprefix)(rssuffix)
        ] (rightpanel) {};
    \end{scope}
    \node[below=5pt of rightpanel] {(b)};
\path (lssuffix.east) -- (rsprefix.west)
    coordinate[pos=0.47] (blockL)
    coordinate[pos=0.53] (blockR)
    coordinate[pos=0.50] (blockC);
\draw[-, thick, dashed, draw=red!70!black]
    (lssuffix.east) -- (blockL);
\draw[-{Latex[length=4.5mm]}, thick, dashed, draw=red!70!black]
    (blockR) -- (rsprefix.west);
\node[
    circle,
    draw=red!70!black,
    line width=0.8pt,
    fill=white,
    minimum size=16pt,
    inner sep=0pt
] (blockmark) at (blockC) {};
\draw[red!70!black, line width=1.2pt, line cap=round]
    ($(blockmark.center)+(-0.12,0.12)$) --
    ($(blockmark.center)+(0.12,-0.12)$);
\draw[red!70!black, line width=1.2pt, line cap=round]
    ($(blockmark.center)+(-0.12,-0.12)$) --
    ($(blockmark.center)+(0.12,0.12)$);
    \end{tikzpicture}
    }
\caption{
(a) Optimal magazine configurations $\mathcal{M}_{\sigma}$ covering $\sigma=(4,1,2,3)$. Since $T_4\subseteq M_2$ and
$\sigma_1>\sigma_2$, node $\sigma$ is pruned.
(b) Optimal magazine configurations for the optimal solution
$\sigma^*=(4,5,1,2,3)$, which cannot be generated after pruning
$\sigma$.}
    \label{fig:non_exact_rule}
\end{figure}

As an example, consider an instance with $n=5$, $m=4$, and $c=3$, where $T_1=\{2,3\}$, $T_2=\{1,3\}$, $T_3=\{1,2\}$, $T_4=\{3,4\}$, and $T_5=\{1,4\}$. An optimal solution is $\sigma^*=(4,5,1,2,3)$, with $z(\sigma^*)=m-c=1$.
Assume that PI-B\&B-H reaches node $(1,2,3)$. In order to generate $\sigma^*$ from this node, job $4$ must first be inserted before job $1$, leading to the partial sequence $\sigma=(4,1,2,3)$. For this sequence, an optimal sequence of magazine configurations obtained by applying the KTNS policy satisfies $T_4\subseteq M_2$, since jobs $4$ and $1$ are covered by the same magazine configuration $\{2,3,4\}$; see Figure~\ref{fig:non_exact_rule}(a). Moreover, the two corresponding jobs satisfy $\sigma_1>\sigma_2$. Therefore, according to the fathoming rule, node $\sigma$ is pruned.
As a consequence, PI-B\&B-H cannot insert job $5$ between jobs $4$ and $1$, and the optimal completion $\sigma^*=(4,5,1,2,3)$ cannot be generated; see Figure~\ref{fig:non_exact_rule}(b). Observe that the arguments used in Section~\ref{sec:BBL_symmetry} to prove the correctness of the dominance rules do not apply here.
Indeed, consider the locally dominating node $\sigma'=(1,4,2,3)$ associated with the fathomed node $\sigma$. The completion $\overline{\sigma}'=(5,1,4,2,3)$ of $\sigma'$, which provides the natural counterpart to the optimal completion $\sigma^*=(4,5,1,2,3)$ of $\sigma$, satisfies $z(\overline{\sigma}')=2>1=z(\sigma^*)$. Thus, the local dominance relation between $\sigma$ and $\sigma'$ is not necessarily preserved after the insertion of the remaining jobs.

This example shows that the rule may prune a branch containing an optimal completion. However, this does not necessarily imply that PI-B\&B-H fails to find an optimal solution for the instance, since another optimal solution may still be generated through a different, non-pruned branch. In fact, in almost all tested instances, PI-B\&B-H still finds an optimal solution, while suboptimal solutions are obtained only in very few cases.

\section{An Exact Algorithm for the SSP}\label{sec:ExactAlgorithm}
This section presents C-B\&B, an exact algorithm for the SSP that integrates the B\&B algorithms introduced in the previous sections. An informal pseudo-code of C-B\&B is reported in Algorithm~\ref{alg:ExactAlgorithm}. The algorithm first computes the initial lower bound $lb$ as $L_3(\emptyset)$, and initializes the incumbent upper bound $ub$ to a sufficiently large value, as shown in line~\ref{alg:Exact_Step1}.

A short preliminary run of PE-B\&B is then performed with time limit $t_{\max}$ (see line~\ref{alg:Exact_Step2}). This step is mainly intended to detect easy instances, and in particular those whose optimal value is equal to the trivial lower bound $m-c$. After this run, the incumbent sequence, the lower bound and the upper bound are updated in line~\ref{alg:Exact_Step3}. If $lb=ub$, the incumbent solution is certified to be optimal and the algorithm returns it, as shown in line~\ref{alg:Exact_Step4}. Otherwise, PI-B\&B-H is invoked in line~\ref{alg:Exact_Step5} with the same short time limit $t_{\max}$. Line~\ref{alg:Exact_Step6} updates the incumbent solution and the upper bound with the best sequence found by PI-B\&B-H. The same line updates $lb$ with the lower bound it has proved, but only if PI-B\&B-H has not applied the non-exact fathoming rule described in Section~\ref{sec:nonexactrule}. If $lb=ub$, the incumbent solution is certified to be optimal and returned, as shown in line~\ref{alg:Exact_Step7}. If PI-B\&B-H reaches the time limit, HGS is invoked in line~\ref{alg:Exact_Step9}. In this phase, we rely on an enriched implementation of the HGS, which differs from the original one in two aspects. First, each individual generated during the genetic search is evaluated using the IGA instead of the KTNS procedure originally adopted, thus considerably reducing the total evaluation time. Second, the incumbent sequence $\sigma^*$ found by PI-B\&B-H is inserted into the initial population to speed up the convergence of the algorithm, while the remaining individuals are generated randomly to preserve diversity. Once the HGS terminates, the upper bound is updated with the best solution value it has returned, as shown in line~\ref{alg:Exact_Step10}. If this update makes $ub$ equal to $lb$, the incumbent is again certified to be optimal and returned.

The algorithm then enters the final exact enumeration phase. The exact solver, either PI-B\&B or PE-B\&B, is selected according to the remaining gap with respect to the trivial lower bound $m-c$. If $ub-(m-c)\leq \Delta$, PE-B\&B is invoked (see line~\ref{alg:Exact_Step12}); otherwise, PI-B\&B is executed (see line~\ref{alg:Exact_Step13}). The parameter $\Delta$ therefore determines the threshold used to select between the two exact solvers. This choice is motivated by the observation that, when the gap is small, only a few tool switches beyond the trivial lower bound remain possible. In this case, several jobs are likely to be processed under identical magazine configurations, making the dominance rules introduced in Section~\ref{sec:BBL_symmetry} particularly effective. Conversely, when the gap is larger, the branching scheme of PI-B\&B tends to be more effective, as it promotes a faster increase in the lower bounds associated with the explored nodes. The selected exact algorithm is then executed until optimality is proven, updating the incumbent sequence and the upper bound whenever a better solution is found. Finally, C-B\&B returns the incumbent sequence $\sigma^*$, together with its value $ub$, as shown in line~\ref{alg:Exact_Step15}. 
\begin{algorithm}
\small
\caption{Exact Combinatorial B\&B Algorithm for the SSP}
\label{alg:ExactAlgorithm}
\KwIn{An SSP instance, $t_{\max}$, $\Delta$}
$\sigma^* \leftarrow \emptyset$; $lb \leftarrow L_3(\emptyset)$; $ub \leftarrow +\infty$\;\label{alg:Exact_Step1}
Run \textsc{PE-B\&B} with time limit $t_{\max}$\;\label{alg:Exact_Step2}
Update $\sigma^*$, $lb$, and set $ub \leftarrow z(\sigma^*)$\;\label{alg:Exact_Step3}
\lIf{$lb=ub$}{\Return{$\sigma^*$, $ub$}}\label{alg:Exact_Step4}
Run \textsc{PI-B\&B-H} with time limit $t_{\max}$\;\label{alg:Exact_Step5}
Update $\sigma^*$, $lb$, and set $ub \leftarrow z(\sigma^*)$\;\label{alg:Exact_Step6}
\lIf{$lb=ub$}{\Return{$\sigma^*$, $ub$}}\label{alg:Exact_Step7}
\If{\textsc{PI-B\&B-H} reaches the time limit}{
   Run HGS\;\label{alg:Exact_Step9}
   Update $\sigma^*$ and set $ub \leftarrow z(\sigma^*)$\;\label{alg:Exact_Step10}
   \lIf{$lb=ub$}{\Return{$\sigma^*$, $ub$}}\label{alg:Exact_Step11}
}
\eIf{$(ub-(m-c))\leq \Delta$}{
   Run \textsc{PE-B\&B}\;\label{alg:Exact_Step12}
}{
   Run \textsc{PI-B\&B}\;\label{alg:Exact_Step13}
}
Update $\sigma^*$ and set $ub \leftarrow z(\sigma^*)$\;\label{alg:Exact_Step14}
\Return{$\sigma^*$, $ub$}\;\label{alg:Exact_Step15}
\end{algorithm}
\section{Computational Results}\label{sec:ComputationalResults}
In this section, we assess the performance of the proposed C-B\&B algorithm on the benchmark instances from the literature and compare it with the best exact approaches  available for the SSP. 
All algorithms introduced in the previous sections were implemented in C++ and run on an Apple MacBook Air (M3) with 16 GB of RAM, under macOS 26. Algorithm C-B\&B was given a time limit of 1200 seconds per instance. Its two input parameters, $\Delta$ and $t_{\max}$, were calibrated by means of preliminary experiments on the whole benchmark and set to 4 and $0.001$ seconds, respectively.

\subsection{Benchmark Instances}\label{sec:instances}
We consider the three sets of benchmark instances commonly used in the SSP literature to evaluate exact algorithms. All of them are publicly available at \href{https://github.com/jordanamecler/HGS-SSP/tree/master/Instances}{\nolinkurl{https://github.com/jordanamecler/HGS-SSP/tree/master/Instances}}, each in a separate folder: ``Crama'', ``Yanasse'', and ``Catanzaro'', containing the instances proposed by \citet{CKOS1994}, \citet{Yanasse2009}, and \citet{Catanzaro2015766}, respectively. All the instances were produced by means of the instance-generating scheme introduced by \citet{CKOS1994}, which takes as input the parameters $n$, $m$, $c$, and the minimum and maximum number of tools that a job can require, denoted by ${Min}$ and ${Max}$, respectively. Given a combination of these parameters, the number of tools required by each job is randomly drawn from the interval $[{Min},{Max}]$, and the corresponding toolset is then randomly generated so that it neither contains nor is contained in any of the toolsets generated before it, ensuring that no job is dominated.

Table~\ref{tab:instances} summarizes how each set is partitioned into groups, each collecting the instances generated under a different parameter configuration, and reports, for each group, the corresponding parameter values (or the intervals they range in), together with the number of instances it contains (column ``\#Inst''). 
Following the nomenclature adopted by \citet{Mecler2021}, we denote the four groups of the first set by $C_1$, $C_2$, $C_3$, and $C_4$, the five groups of the second set by $A$, $B$, $C$, $D$, and $E$, and the four groups of the third set by ${datA}$, ${datB}$, ${datC}$, and ${datD}$.
Note that the three sets partly overlap in the configurations they cover, as both \citet{Yanasse2009} and \citet{Catanzaro2015766} reused parameter combinations already adopted by \citet{CKOS1994}: groups ${datA}$, ${datB}$, ${datC}$, and ${datD}$ stem from exactly the same parameter combinations as $C_1$, $C_2$, $C_3$, and $C_4$, respectively, whereas group $E$ is split into two halves, covering the combinations of $C_1$ and $C_2$, respectively.

\subsection{Comparison with the Existing Literature}\label{sec:comparison}
We compare C-B\&B with the three most effective exact approaches currently available for the SSP, namely:
\begin{itemize}
\item LSS04, the B\&B algorithm by \citet{LSS2004};
\item SCY21, the multicommodity flow formulation by \citet{daSilva2021};
\item AO24, the ILP approach by \citet{Akhundov2024}.
\end{itemize}
The results of LSS04 were obtained by re-implementing the algorithm described by \citet{LSS2004}, including both lower bounds they proposed. Our implementation was run in the same computational environment and under the same time limit as C-B\&B and, to make the comparison as fair as possible, it was also provided with the initial upper bound computed by HGS, which we improved by evaluating each individual of the genetic search with the IGA instead of the KTNS procedure originally adopted. 
The results of SCY21 and AO24, in contrast, were taken directly from \citet{Akhundov2024}, where the corresponding models were solved with Gurobi 8.0.1, through its Python interface, on a workstation equipped with an AMD Ryzen Threadripper 2950X 16-core processor and 64 GB of RAM, running Ubuntu 18.04.6 LTS, with a time limit of 3600 seconds per instance.
Although that environment differs from the one in which LSS04 and C-B\&B were run, it is worth noting that SCY21 and AO24 were granted a time limit three times larger than that allowed to our algorithm, and that the margin by which C-B\&B outperforms them is far too wide to be ascribed to the difference in the hardware adopted.

\begin{table}
\caption{Overview of the SSP benchmark instances and summary of the computational results.}
\label{tab:instances}
\centering
\setlength{\tabcolsep}{3pt}
\begin{adjustbox}{max width=\textwidth}
\begin{tabular}{llcccccrrrrrrrrr}
\toprule
 & & & & & & & & \multicolumn{2}{c}{LSS04} & \multicolumn{2}{c}{SCY21}
                & \multicolumn{2}{c}{AO24} & \multicolumn{2}{c}{C-B\&B} \\
\cmidrule(lr){9-10} \cmidrule(lr){11-12} \cmidrule(lr){13-14} \cmidrule(lr){15-16}
Set & Group & $n$ & $m$ & ${Min}$ & ${Max}$ & $c$ & \#Inst & Opt & Sec & Opt & Sec & Opt & Sec & Opt & Sec \\
\cmidrule(lr){1-8} \cmidrule(lr){9-16}
\multirow{4}{*}{\citet{CKOS1994}}
 & $C_1$ & 10 & 10 & 2 & 4  & $[4,7]$   & 40 & \textbf{40} & 0.02 & -- & & -- & &  \textbf{40} & \textbf{0.00} \\
 & $C_2$ & 15 & 20 & 2 & 6  & $[6,12]$  & 40 & \textbf{40} & 0.85 & -- & & -- & & \textbf{40} & \textbf{0.05} \\
 & $C_3$ & 30 & 40 & 5 & 15 & $[15,25]$ & 40 & 0 & & -- & & -- & & \textbf{3} & \textbf{648.32} \\
 & $C_4$ & 40 & 60 & 7 & 20 & $[20,30]$ & 40 & -- & & -- & & -- & & -- & \\
\cmidrule(lr){2-8} \cmidrule(lr){9-16}
 & \multicolumn{6}{l}{\textbf{Total}} & {160} & 80 & & -- & & -- & & \textbf{83} & \\
\cmidrule(lr){1-8} \cmidrule(lr){9-16}
\multirow{5}{*}{\citet{Yanasse2009}}
 & $A$ & 8         & $[15,25]$ & $[2,15]$ & $[5,20]$ & $[5,20]$ & 340 & \textbf{340} & 0.01   & \textbf{340} &        & \textbf{340} &        & \textbf{340} & \textbf{0.00}  \\
 & $B$ & 9         & $[15,25]$ & $[2,15]$ & $[5,20]$ & $[5,20]$ & 370 & \textbf{370} & 0.02   & \textbf{370} & 5.16   & \textbf{370} & 19.12  & \textbf{370} & \textbf{0.00}  \\
 & $C$ & 15        & $[15,25]$ & $[2,15]$ & $[5,20]$ & $[5,20]$ & 340 &          339 & 46.68  &          227 & 788.05 &          263 & 504.05 & \textbf{340} & \textbf{0.07}  \\
 & $D$ & $[20,25]$ & $[15,25]$ & $[2,10]$ & $[5,15]$ & $[5,20]$ & 260 &          169 & 149.86 &          114 & 132.30 &          169 & 137.68 & \textbf{260} & \textbf{15.63} \\
 & $E$ & $[10,15]$ & $[10,20]$ & 2        & $[4,6]$  & $[4,12]$ &  80 &  \textbf{80} & 1.63   &           71 & 187.08 &           75 & 100.90 &  \textbf{80} & \textbf{0.02}  \\
\cmidrule(lr){2-8} \cmidrule(lr){9-16}
 & \multicolumn{6}{l}{\textbf{Total}} & {1390} &1298& & 1122 & & 1217 & & \textbf{1390} & \\
\cmidrule(lr){1-8} \cmidrule(lr){9-16}
\multirow{4}{*}{\citet{Catanzaro2015766}}
 & ${datA}$ & 10 & 10 & 2 & 4  & $[4,7]$   & 40 & \textbf{40} & 0.02 & -- & & -- & & \textbf{40} & \textbf{0.00} \\
 & ${datB}$ & 15 & 20 & 2 & 6  & $[6,12]$  & 40 & \textbf{40} & 1.29 & -- & & -- & & \textbf{40} & \textbf{0.05} \\
 & ${datC}$ & 30 & 40 & 5 & 15 & $[15,25]$ & 40 & 0 & & -- & & -- & & \textbf{1} & \textbf{943.05} \\
 & ${datD}$ & 40 & 60 & 7 & 20 & $[20,30]$ & 40 & -- & & -- & & -- & & -- & \\
\cmidrule(lr){2-8} \cmidrule(lr){9-16}
 & \multicolumn{6}{l}{\textbf{Total}} & {160} & 80 & & -- & & -- & & \textbf{81} & \\
\bottomrule
\end{tabular}
\end{adjustbox}
\end{table}

Table~\ref{tab:instances} reports the overall results obtained by each algorithm: columns ``Opt'' give the number of instances solved to proven optimality, whereas columns ``Sec'' give the average solution time, in seconds, computed over the solved instances. A dash in the ``Opt'' column means that no result is available for that algorithm on the corresponding group, whereas an empty entry in the ``Sec'' column means that the average computing time is either undefined, because no instance of the group was solved, or unknown, because the computing times are not available. 
C-B\&B is the only algorithm that closes the whole benchmark proposed by \citet{Yanasse2009}, commonly used to test exact algorithms for the SSP: it solves all 1390 instances, whereas LSS04, SCY21, and AO24 leave 92, 268, and 173 of them open, respectively. 
Moreover, C-B\&B is the fastest on every group by a large margin: on the instances with at most 15 jobs its average time never exceeds $0.07$ seconds, and no single instance takes more than $0.62$ seconds, against the $46.68$ seconds of LSS04 and the several hundred seconds of SCY21 and AO24. On group $D$, it closes all 260 instances in $15.63$ seconds on average, against the more than $130$ seconds that the other approaches spend on the instances they do solve.
On the other two sets, the results are in line with those discussed above: C-B\&B closes all the instances with 10 jobs in negligible time and all those with 15 jobs in $0.05$ seconds on average. 
Finally, C-B\&B is also able to close instances with 30 jobs: it solves 3 of the 40 instances of group $C_3$ and 1 of the 40 instances of group ${datC}$.  To the best of our knowledge, it is the first exact algorithm able to solve instances of this size to proven optimality.

\subsection{C-B\&B Component Analysis}\label{sec:component_analysis}
We now analyze the behavior of C-B\&B, and the contribution of its individual components, on the most difficult instances of the benchmark generated by \citet{Yanasse2009}, namely those of groups $C$ and $D$. As a baseline we adopt LSS04, which is the best performing of the three exact approaches from the literature. 
\begin{table}
\caption{C-B\&B component analysis and comparison with LSS04 on the instances of groups $C$ and $D$.}
\label{tab:laporteCD}
\centering
\setlength{\tabcolsep}{3pt}
\begin{adjustbox}{max width=\textwidth}
\begin{tabular}{ccrrrrrrrrrrrrrrrr}
\toprule
& & & & & \multicolumn{4}{c}{\textbf{LSS04}} & \multicolumn{9}{c}{\textbf{C-B\&B}} \\
\cmidrule(lr){6-9} \cmidrule(lr){10-18}
& & & & & & & & & \multicolumn{2}{c}{\textbf{Prep.}} & \multicolumn{2}{c}{\textbf{PI-B\&B}} & \multicolumn{2}{c}{\textbf{PE-B\&B}} & \multicolumn{3}{c}{\textbf{Overall}} \\
\cmidrule(lr){10-11} \cmidrule(lr){12-13} \cmidrule(lr){14-15} \cmidrule(lr){16-18}
Group & $n$ & $m$ & $c$ & \#Inst & Opt & $\mathrm{Sec}_{\mathrm{HGS}}$ & Sec & Nodes & Opt & $\mathrm{Sec}$ & Opt & Sec & Opt & Sec & Opt & Sec & Nodes \\
\cmidrule(lr){1-5} \cmidrule(lr){6-9} \cmidrule(lr){10-18}
\multirow{9}{*}{$C$}
 & 15 & 15 & 5  & 10  & \textbf{10}  & 0.06 & 5.53  & \num{12258859}  & 0   & 0.03 & 9   & 0.00 & 1   & 0.06 & \textbf{10}  & \textbf{0.04} & \textbf{56\,188}  \\
 & 15 & 15 & 10 & 30  & \textbf{30}  & 0.13 & 24.21 & \num{49390630}  & 10  & 0.03 & 14  & 0.01 & 6   & 0.04 & \textbf{30}  & \textbf{0.04} & \textbf{59\,328}  \\
 & 15 & 20 & 5  & 10  & \textbf{10}  & 0.06 & 2.68  & \num{6434640}   & 0   & 0.03 & 7   & 0.00 & 3   & 0.17 & \textbf{10}  & \textbf{0.08} & \textbf{183\,426} \\
 & 15 & 20 & 10 & 30  & \textbf{30}  & 0.12 & 32.99 & \num{47402296}  & 9   & 0.05 & 19  & 0.01 & 2   & 0.00 & \textbf{30}  & \textbf{0.06} & \textbf{32\,648}  \\
 & 15 & 20 & 15 & 60  & 59           & 0.18 & 43.86 & \num{63170190}  & 19  & 0.03 & 28  & 0.01 & 13  & 0.07 & \textbf{60}  & 0.05          & \num{94380}       \\
 & 15 & 25 & 5  & 10  & \textbf{10}  & 0.08 & 3.37  & \num{6653115}   & 1   & 0.06 & 8   & 0.01 & 1   & 0.14 & \textbf{10}  & \textbf{0.08} & \textbf{86\,963}  \\
 & 15 & 25 & 10 & 30  & \textbf{30}  & 0.14 & 58.74 & \num{77253535}  & 5   & 0.09 & 20  & 0.02 & 5   & 0.00 & \textbf{30}  & \textbf{0.10} & \textbf{95\,876}  \\
 & 15 & 25 & 15 & 60  & \textbf{60}  & 0.17 & 92.76 & \num{112383683} & 13  & 0.07 & 31  & 0.02 & 16  & 0.04 & \textbf{60}  & \textbf{0.09} & \textbf{175\,525} \\
 & 15 & 25 & 20 & 100 & \textbf{100} & 0.24 & 40.78 & \num{51919350}  & 36  & 0.03 & 38  & 0.02 & 26  & 0.09 & \textbf{100} & \textbf{0.06} & \textbf{135\,645} \\
\cmidrule(lr){2-5} \cmidrule(lr){6-9} \cmidrule(lr){10-18}
 & \multicolumn{3}{l}{\textbf{Tot/Avg}} & 340 & 339 & 0.17 & 46.68 & \num{62350611} & 93 & 0.05 & 174 & 0.01 & 73 & 0.07 & \textbf{340} & 0.07 & \num{113706} \\
\midrule
\multirow{15}{*}{$D$}
 & 20 & 15 & 5  & 10 & 1           & 0.22 & 975.19 & \num{2353705388} & 0  & 0.18 & 10 & 0.33  & 0  &       & \textbf{10} & 0.51          & \num{2816070}      \\
 & 20 & 15 & 10 & 20 & 11          & 0.62 & 172.56 & \num{417589662}  & 3  & 0.24 & 7  & 0.11  & 10 & 2.46  & \textbf{20} & 1.50          & \num{4627743}      \\
 & 20 & 20 & 5  & 10 & 6           & 0.20 & 428.80 & \num{951859253}  & 0  & 0.20 & 10 & 0.08  & 0  &       & \textbf{10} & 0.28          & \num{467471}       \\
 & 20 & 20 & 10 & 10 & \textbf{10} & 0.57 & 43.09  & \num{95388305}   & 4  & 0.42 & 0  &       & 6  & 0.26  & \textbf{10} & \textbf{0.58} & \textbf{511\,236}  \\
 & 20 & 20 & 15 & 30 & 23          & 0.85 & 211.39 & \num{459139137}  & 9  & 0.41 & 0  &       & 21 & 2.39  & \textbf{30} & 2.08          & \num{5006064}      \\
 & 20 & 25 & 5  & 10 & 5           & 0.19 & 306.95 & \num{640293119}  & 0  & 0.22 & 9  & 0.16  & 1  & 25.72 & \textbf{10} & 2.94          & \num{8955572}      \\
 & 20 & 25 & 10 & 10 & \textbf{10} & 0.53 & 8.61   & \num{16853431}   & 6  & 0.58 & 0  &       & 4  & 0.37  & \textbf{10} & \textbf{0.73} & \textbf{421\,612}  \\
 & 20 & 25 & 15 & 40 & 25          & 0.90 & 201.87 & \num{349631862}  & 10 & 0.57 & 21 & 41.05 & 9  & 7.27  & \textbf{40} & 23.76         & \num{187386102}    \\
 & 20 & 25 & 20 & 40 & 31          & 1.19 & 85.06  & \num{156846267}  & 20 & 0.42 & 1  & 0.02  & 19 & 8.13  & \textbf{40} & 4.28          & \num{9080667}      \\
\cmidrule(lr){2-5} \cmidrule(lr){6-9} \cmidrule(lr){10-18}
  &   \multicolumn{2}{l}{\textbf{Tot/Avg}} & & 180 & 122 & 0.81 & 164.29 & \num{337257878} & 52 & 0.40 & 58 & 14.97 & 70 & 4.62 & \textbf{180} & 7.02 & \num{45739595} \\
\cmidrule(lr){2-5} \cmidrule(lr){6-9} \cmidrule(lr){10-18}
 & 25 & 15 & 10 & 10 & 3           & 1.30 & 246.63        & \num{585368335} & 2  & 1.39 & 0 & & 8  & 2.45   & \textbf{10} & 3.35   & \num{7519692}   \\
 & 25 & 20 & 10 & 10 & 6           & 1.28 & 398.36        & \num{795107155} & 1  & 1.22 & 0 & & 9  & 3.65   & \textbf{10} & 4.51   & \num{10153627}  \\
 & 25 & 20 & 15 & 10 & 0           &      &               &                 & 0  & 1.88 & 0 & & 10 & 202.15 & \textbf{10} & 204.04 & \num{555730619} \\
 & 25 & 25 & 10 & 10 & 7           & 1.53 & 292.63        & \num{532301575} & 3  & 1.51 & 0 & & 7  & 28.39  & \textbf{10} & 21.39  & \num{51459402}  \\
 & 25 & 25 & 15 & 10 & \textbf{10} & 2.82 & \textbf{2.82} & \textbf{0}      & 10 & 3.10 & 0 & & 0  &        & \textbf{10} & 3.10   & \textbf{0}      \\
 & 25 & 25 & 20 & 30 & 21          & 3.59 & 3.59          & \num{0}         & 19 & 1.52 & 0 & & 11 & 35.51  & \textbf{30} & 14.54  & \num{31376692}  \\
\cmidrule(lr){2-5} \cmidrule(lr){6-9} \cmidrule(lr){10-18}
 &   \multicolumn{2}{l}{\textbf{Tot/Avg}} & & 80 & 47 & 2.68 & 112.38 & \num{218145935} & 35 & 1.71 & 0 & & 45 & 59.18 & \textbf{80} & 35.00 & \num{89874177} \\
\midrule
\multicolumn{4}{l}{\textbf{Tot/Avg}} & 600 & 508 & 0.56 & 81.01 & \num{142785782} & 180 & 0.38 & 232 & 3.75 & 188 & 15.91 & \textbf{600} & 6.81 & \num{25769536} \\
\bottomrule
\end{tabular}
\end{adjustbox}
\end{table}
Table~\ref{tab:laporteCD} details the comparison. Each line refers to a subset of instances sharing the same values of $n$, $m$, and $c$, whose cardinality is reported in column ``\#Inst''. For each algorithm, the table also reports the average number of nodes explored (columns ``Nodes''), together with the contribution of each single component. For LSS04, column ``$\mathrm{Sec}_{\mathrm{HGS}}$'' gives the average time spent by HGS to compute the initial upper bound. For C-B\&B, the two columns under ``Prep.''\ give the number of instances already closed at the end of the preprocessing phase and the average duration of that phase, whereas those under ``PI-B\&B'' and ``PE-B\&B'' give the number of instances closed by the corresponding enumeration scheme and its average running time on them.

On group $C$, C-B\&B closes all 340 instances in $0.07$ seconds and \num{113706} nodes on average, against the $46.68$ seconds and \num{62350611} nodes of LSS04. The reduction of the search tree is observed on every subset, with factors ranging from 35 to more than 1400. On instances of this size, C-B\&B as a whole is even faster than HGS alone: it takes $0.07$ seconds overall, whereas LSS04 spends $0.17$ seconds just to compute its initial upper bound with HGS. 
On the instances of group $D$ with 20 jobs, C-B\&B closes all 180 instances in $7.02$ seconds on average, whereas LSS04 solves 122 of them, needing $164.29$ seconds on average on those it closes. Observe that the subsets on which LSS04 performs worst are precisely those in which C-B\&B most frequently invokes PI-B\&B, whose enumeration scheme is complementary to the one adopted by LSS04: this confirms that the solver-selection strategy of Section~\ref{sec:ExactAlgorithm} correctly exploits such complementarity. 
The instances with 25 jobs are more homogeneous: their optimal values lie close to the trivial lower bound $m-c$, so that PE-B\&B is always selected for the final exact enumeration. This allows a clean comparison between the components of PE-B\&B introduced in Section~\ref{sec:BBL} and those of LSS04, since the two algorithms share the same branching scheme. Owing to these components, PE-B\&B explores far fewer nodes and closes all 80 instances, whereas LSS04 proves optimality for only 47 of them. Moreover, LSS04 generally solves only the easiest instances of each subset: on the two subsets with $m=25$ and $c\in\{15,20\}$, for example, it closes only the instances for which HGS already attains $m-c$, so that optimality follows at the root node.

\section{Conclusions}\label{sec:Conclusions}
In this paper we proposed C-B\&B, an innovative algorithm for the exact solution of the job sequencing and tool switching problem. The proposed algorithm is purely combinatorial, in that it does not resort to any MILP formulation, and is based on the combination of two complementary B\&B schemes, namely PI-B\&B and PE-B\&B. The first one relies on an insertion-based branching scheme designed to reduce the size of the implicit enumeration tree, which we enriched with a collection of new bounding functions that substantially limit the number of explored nodes. The second one builds on the branching scheme introduced by \citet{LSS2004}, which we strengthened with a new bounding function and two novel dominance rules. We also designed PI-B\&B-H, a heuristic variant of PI-B\&B capable of rapidly generating high-quality solutions for the SSP. We finally combined these algorithms into a single procedure, which starts with a preprocessing phase that incorporates PI-B\&B-H to compute an initial incumbent solution, and then selects which of the two exact schemes to use for the final enumeration, according to the characteristics of the instance.

Extensive computational experiments show that C-B\&B outperforms the exact approaches available in the literature by a large margin, closing, for the first time, all the benchmark instances proposed so far with up to 25 jobs, with a substantial reduction in computing time. Moreover, on the instances with at most 15 jobs, it is even faster than the most effective heuristic available for the problem, namely the HGS of \citet{Mecler2021}. To the best of our knowledge, C-B\&B is also the first algorithm able to tackle the 30-job instances from the literature, solving four of them to proven optimality within a time limit of only 1200 seconds.


\section*{Acknowledgment}
We acknowledge financial support from the Natural Sciences and Engineering Research Council of Canada (NSERC) under Grant 2026-05294, the University of Modena and Reggio Emilia under Grant FAR-DISMI-25, and the Emilia-Romagna regional funding program FSE+ 2021--2027 under Council Resolution No.~693/2023.

%
%
\bibliographystyle{plainnat}
\bibliography{bib}

@ARTICLE{LSS2004,
author={Laporte, G. and Salazar-González, J. J. and Semet, F.},
title={Exact algorithms for the job sequencing and tool switching problem},
journal = {IIE Transactions},
year={2004},
volume={36},
pages={37-45},
}

@ARTICLE{C2019,
author={Calmels, D.},
title={The job sequencing and tool switching problem: state-of-the-art literature review, classification, and trends},
journal={International Journal of Production Research},
year={2019},
volume={57},
pages={5005-5025},
}

@ARTICLE{CKOS1994,
author={Crama, Y. and Kolen, A.W.J. and Oerlemans, A.G. and Spieksma, F.C.R.},
title={Minimizing the number of tool switches on a flexible machine},
journal={International Journal of Flexible Manufacturing Systems},
year={1994},
volume={6},
pages={33-54},
}

@ARTICLE{TD1988,
author={Tang, C. S. and Denardo, E. V.},
title={Models arising from a flexible manufacturing machine, part {I}: minimization of the number of tool switches},
journal={Operations Research},
year={1988},
volume={36},
pages={767-777},
}

@ARTICLE{ILL2022,
author={Iori, M.
and Locatelli, A.
and Locatelli, M.},
title={A {GRASP} for a real-world scheduling problem with unrelated parallel print machines and sequence-dependent setup times},
journal={International Journal of Production Research},
year={2023},
pages = {7367-7385},
volume = {61}
}

@InProceedings{ILLS2022,
author="Iori, M.
and Locatelli, A.
and Locatelli, M.
and Salazar-Gonz\'alez, J. J.",
title="Tool Switching Problems in the Context of Overlay Printing with Multiple Colours",
booktitle="Proceedings of 7th International Symposium on Combinatorial Optimization (ISCO 2022)",
volume="13526",
year="2022",
pages="260--271",
}

@ARTICLE{L2023,
	author = {Locatelli, A.},
	title = {Optimization methods for knapsack and tool switching problems},
	year = {2023},
	journal = {4OR},
	volume = {21},
	  @COMMENTnumber = {4},
	pages = {715 – 716},
}

@ARTICLE{Ghiani2010,
	author = {Ghiani, G. and Grieco, A. and Guerriero, E.},
	title = {Solving the job sequencing and tool switching problem as a nonlinear least cost {Hamiltonian} cycle problem},
	year = {2010},
	journal = {Networks},
	volume = {55},
	@COMMENTnumber = {4},
	pages = {379 – 385},
}

@ARTICLE{Kruskal1956,
	author = {Kruskal, J. B.},
	title = {On the shortest spanning subtree of a graph and the traveling salesman problem},
	year = {1956},
	journal = {Proceedings of the American Mathematical Society},
	volume = {7},
	@COMMENTnumber = {1},
	pages = {48 – 50},
}

@ARTICLE{Almeida2025,
	author = {Almeida, A. L. Barroso and de Castro Lima, J. and Carvalho, M. A. M.},
    title = {On serial and parallel evaluation functions for Job Sequencing and Tool Switching problems},
    journal = {Computers \& Operations Research},
    volume = {177},
    pages = {106969},
    year = {2025},
}

@ARTICLE{Akhundov2024,
	author = {Akhundov, N. and Ostrowski, J.},
	title = {Exploiting symmetry for the job sequencing and tool switching problem},
	year = {2024},
	journal = {European Journal of Operational Research},
	volume = {316},
	@COMMENTnumber = {3},
	pages = {976 – 987},
}

@ARTICLE{Iori2024,
	author = {Iori, M. and Locatelli, A. and Locatelli, M. and Salazar-González, J. J.},
	title = {Tool switching problems with tool order constraints},
	year = {2024},
	journal = {Discrete Applied Mathematics},
	volume = {347},
	pages = {249 – 262},
}

@ARTICLE{daSilva2021,
	author = {da Silva, T. T. and Chaves, A. A. and Yanasse, H. H.},
	title = {A new multicommodity flow model for the job sequencing and tool switching problem},
	year = {2021},
	journal = {International Journal of Production Research},
	volume = {59},
	@COMMENTnumber = {12},
	pages = {3617 – 3632},
}

@ARTICLE{Belady1966,
  author={Belady, L. A.},
  journal={IBM Systems Journal}, 
  title={A study of replacement algorithms for a virtual-storage computer}, 
  year={1966},
  volume={5},
  @COMMENTnumber={2},
  pages={78-101},}

@article{mutze2014,
  title={Scheduling with few changes},
  author={M{\"u}tze, T.},
  journal={European Journal of Operational Research},
  volume={236},
  @COMMENTnumber={1},
  pages={37--50},
  year={2014},
}

@ARTICLE{Privault1995,
	author = {Privault, C. and Finke, G.},
	title = {Modelling a tool switching problem on a single {NC}-machine},
	year = {1995},
	journal = {Journal of Intelligent Manufacturing},
	volume = {6},
	@COMMENTnumber = {2},
	pages = {87 – 94},}

@inproceedings{legrand2025,
  title={A Dynamic Programming Approach for the Job Sequencing and Tool Switching Problem},
  author={Legrand, E. and Copp{\'e}, V. and Catanzaro, D. and Schaus, P.},
  booktitle={International Conference on the Integration of Constraint Programming, Artificial Intelligence, and Operations Research},
  pages={70--85},
  year={2025},
  organization={Springer}
}

@ARTICLE{Mecler2021,
  title={A simple and effective hybrid genetic search for the job sequencing and tool switching problem},
  author={Mecler, J. and Subramanian, A. and Vidal, T.},
  journal={Computers \& Operations Research},
  volume={127},
  pages={105153},
  year={2021}
  }

@ARTICLE{Qiu2026,
	author = {Qiu, Y. and Cherniavskii, M. and Goldengorin, B. and Pardalos, P. M.},
	title = {A Computational Study of the Tool Replacement Problem},
	year = {2026},
	journal = {INFORMS Journal on Computing},
	volume = {38},
	@COMMENTnumber = {1},
	pages = {86 – 101},
}

@ARTICLE{Hadj2026,
  author = {Hadj Salem, K. and Kramer, A. and Robbes, A.},
  journal = {European Journal of Operational Research},
  title = {Job sequencing and tool switching problem with non-identical parallel machines: Mathematical formulations and modeling improvements},
  year = {2026},
  volume = {330},
  @COMMENTnumber = {2},
  pages = {416--426},
}

@ARTICLE{Soares2024,
  author = {Soares, L. C. R. and Carvalho, M. A. M.},
  journal = {Computers \& Operations Research},
  title = {Biased random-key genetic algorithm for the job sequencing and tool switching problem with non-identical parallel machines},
  year = {2024},
  volume = {163},
  pages = {106509},
}

@ARTICLE{Cura2023,
  author = {Cura, T.},
  journal = {Expert Systems with Applications},
  title = {Hybridizing local searching with genetic algorithms for the job sequencing and tool switching problem with non-identical parallel machines},
  year = {2023},
  volume = {223},
  pages = {119908},
}

@ARTICLE{Calmels2022,
  author = {Calmels, D.},
  journal = {European Journal of Operational Research},
  title = {An iterated local search procedure for the job sequencing and tool switching problem with non-identical parallel machines},
  year = {2022},
  volume = {297},
  @COMMENTnumber = {1},
  pages = {66--85},
}

@ARTICLE{RIFAI2022,
  author = {Rifai, A. P. and Windras Mara, S. T. and Norcahyo, R.},
  journal = {Computers \& Industrial Engineering},
  title = {A two-stage heuristic for the sequence-dependent job sequencing and tool switching problem},
  year = {2022},
  volume = {163},
  pages = {107813},
}

@article{Yanasse2009,
  title     = {Um algoritmo enumerativo baseado em ordenamento parcial para resolu{\c{c}}{\~a}o do problema de minimiza{\c{c}}{\~a}o de trocas de ferramentas},
  author    = {Yanasse, H. H. and Rodrigues, R. de C. M. and Senne, E. L. F.},
  journal   = {Gest{\~a}o \& Produ{\c{c}}{\~a}o},
  volume    = {16},
@COMMENTnumber    = {3},
  pages     = {370--381},
  year      = {2009},
}

@ARTICLE{Catanzaro2015766,
	author = {Catanzaro, D. and Gouveia, L. and Labbé, M.},
	title = {Improved integer linear programming formulations for the job Sequencing and tool Switching Problem},
	year = {2015},
	journal = {European Journal of Operational Research},
	volume = {244},
	@COMMENTnumber = {3},
	pages = {766 – 777},
}

@article{tarjan1975,
	author = {Tarjan, R. E.},
	title = {Efficiency of a Good But Not Linear Set Union Algorithm},
	year = {1975},
	journal = {Journal of the ACM (JACM)},
	volume = {22},
	@COMMENTnumber = {2},
	pages = {215 – 225},
}

\end{document}